\documentclass[11pt]{amsart}
\usepackage{comment}
\usepackage{amssymb,amsmath,amsthm,mathrsfs,amstext,amssymb}
\usepackage{dsfont}
\usepackage{tabularx}
\usepackage{array}
\usepackage[square,sort,comma,numbers]{natbib}
\usepackage{url} 
\usepackage{hyperref}
\usepackage{accents}
\usepackage[shortlabels]{enumitem}
\usepackage{tikz-cd}

\addtolength{\textwidth}{3.5cm}
\addtolength{\oddsidemargin}{-1.75cm}
\addtolength{\evensidemargin}{-1.75cm}



\newtheorem{theorem}{Theorem}
\numberwithin{theorem}{section}
\newtheorem*{theorem*}{Theorem}

\newtheorem*{maintheorem*}{Main Theorem}
\newtheorem{lemma}[theorem]{Lemma}
\newtheorem*{lemma*}{Lemma}

\newtheorem*{fact*}{Fact}
\newtheorem{corollary}[theorem]{Corollary}
\newtheorem*{corollary*}{Corollary}
\newtheorem{proposition}[theorem]{Proposition}
\newtheorem*{proposition*}{Proposition}

\theoremstyle{definition}

\newtheorem{definition}[theorem]{Definition}
\newtheorem*{definition*}{Definition}
\newtheorem{remark}[theorem]{Remark}
\newtheorem*{remark*}{Remark}
\newtheorem{question}[theorem]{Question}
\newtheorem*{question*}{Question}

\newcommand{\A}{{\mathcal{A}}}
\newcommand{\B}{{\mathcal{B}}}

\newcommand{\F}{{\mathcal{F}}}
\newcommand{\G}{{\mathcal{G}}}

\newcommand{\I}{{\mathcal{I}}}

\newcommand{\M}{{\mathcal{M}}}

\newcommand{\EE}{{\mathbb{E}}}

\newcommand{\PP}{{\mathbb{P}}}

\renewcommand{\SS}{{\mathbb{S}}}

\renewcommand{\a}{{\mathfrak{a}}}

\renewcommand{\c}{{\mathfrak{c}}}


\DeclareMathOperator{\dom}{dom}

\DeclareMathOperator{\restr}{\upharpoonright}

\DeclareMathOperator{\partialto}{{\overset{\text{part}}{\to}}}

\newcommand{\simpleset}[1]{{\{{#1}\}}}

\newcommand{\set}[2]{{\{ {#1} \mid {#2} \}}}
\newcommand{\seq}[2]{{\langle {#1} \mid {#2} \rangle}}


\DeclareMathOperator{\extends}{{\mathbin{\leq}}}

\DeclareMathOperator{\forces}{{ \, \Vdash \, }}

\newcommand{\gen}{{\text{gen}}}


\newcommand{\finsubset}[1]{{[#1]^{<\omega}}}
\newcommand{\infsubset}[1]{{[#1]^{\omega}}}
\DeclareMathOperator{\Fin}{Fin}

\DeclareMathOperator{\non}{non}
\DeclareMathOperator{\cov}{cov}
\DeclareMathOperator{\cof}{cof}

\DeclareMathOperator{\pseudosubseteq}{\subseteq^*}

\DeclareMathOperator{\spec}{spec}
\DeclareMathOperator{\aE}{\a_{\text{\normalfont e}}}
\DeclareMathOperator{\av}{\a_{\text{\normalfont v}}}

\DeclareMathOperator{\aT}{\a_{\text{\normalfont T}}}
\newcommand{\functionspace}[2]{{^{#1}{#2}}}
\DeclareMathOperator{\bairespace}{{^\omega \omega}}

\DeclareMathOperator{\supp}{supp}



\DeclareMathOperator{\cofin}{cofin}

\DeclareMathOperator{\trace}{tr}


\title{Van Douwen and many non Van Douwen families}

\author{L. Schembecker}
\address{Department of Mathematics, University of Hamburg, Bundesstraße 55, 20146 Hamburg, Germany}
\email{lukas.schembecker@uni-hamburg.de}

\begin{document}	
	\maketitle
	
	\begin{abstract}
		We prove that the spectrum of Van Douwen families is closed under singular limits.
		For any maximal eventually different family Raghavan defined in \cite{Raghavan_2010} an associated ideal which measures how far the family is from being Van Douwen.
		Under {\sf CH} we prove that every ideal containing $\Fin$ is realized as the associated ideal of some maximal eventually different family.
		Finally, we construct maximal eventually different families with Sacks-indestructible associated ideals to prove that in the iterated Sacks-model every $\aleph_1$-generated ideal containing $\text{Fin}$ is realized.
	\end{abstract}

	\section{Introduction} \label{SEC_Introduction}

	Let $\spec(\aE)$ denote the set of all sizes of maximal eventually different families and $\aE$ its minimum (see Definition~\ref{DEF_MED}).
	Of particular interest for us are the following two well-known open questions.
	Since $\non(\M) \leq \aE$ \cite{BartoszynskiJudah_1995} and $\a < \non(\M)$ is consistent (for example in the random model), also $\a < \aE$ is consistent.
	However, the consistency of the other direction is an open problem:
	
	\begin{question} \label{QUE_A_Leq_Ae}
		Does ${\sf ZFC}$ prove $\a \leq \aE$?
	\end{question}
	
	Secondly, among various types of combinatorial families it seems to be a recurrent feature that their spectra are closed under singular limits.
	Hechler proved this property for mad families in~\cite{Hechler_1972} and Brian recently verified this also for partitions of Baire space into compact sets in \cite{Brian_2021}.
	The analogous question for many other types of combinatorial families is still open, for example for independent families, cofinitary groups or maximal eventually different families:
	
	\begin{question} \label{QUE_Closure}
		Is $\spec(\aE)$ closed under singular limits?
	\end{question}
	
	Instead of answering these questions for maximal eventually different families, we will instead consider them for the strengthened notion of Van Douwen families.
	A maximal eventually different family is called Van Douwen iff it is also maximal with respect to infinite partial functions (see Definition~\ref{DEF_VanDouwen}).
	Van Douwen asked whether families with this strong kind of maximality always exists (see problem 4.2 in Miller's problem list \cite{Miller_1993}).
	Zhang proved in \cite{Zhang_1999} that Van Douwen families of desired sizes may be forced by a c.c.c.\ forcing, so that {\sf MA} implies the existence of a Van Douwen family of size $\c$.
	Later, Raghavan \cite{Raghavan_2010} proved that indeed there always is a Van Douwen family of size $\c$.
	So, let $\spec(\av)$ be the set of sizes of Van Douwen families and $\av$ its minimum.
	It is not hard to prove and well-known that $\a \leq \av$ holds (see Corollary~\ref{COR_A_LEQ_AV}), so Question~\ref{QUE_A_Leq_Ae} has a positive answer for Van Douwen families.
	Using a similar argument In Lemma~\ref{LEM_MED_Forcing_Adds_Mad_Family} we show that the standard forcing $\EE_\F(I)$ for extending an eventually different family $\F$ by $I$-many elements to a maximal eventually different family of size $\max(\left|\F\right|, \left|I\right|)$ also adds a maximal almost disjoint family of the same size:
	
	\begin{lemma*}
		Let $\F$ be an eventually different family and $I$ an uncountable index set.
		Then
		$$
		\EE_\F(I) \forces \max(\left|\F\right|, \left|I\right|) \in \spec(\a).
		$$
	\end{lemma*}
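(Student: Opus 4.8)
The plan is to run the graph construction underlying the proof that $\a \leq \av$ (Corollary~\ref{COR_A_LEQ_AV}) on the generic family itself. Work in $V[G]$ for $\EE_\F(I)$-generic $G$, write $e_i$ ($i \in I$) for the generically added functions, and let $\mathcal{E} = \F \cup \{e_i : i \in I\}$ be the resulting maximal eventually different family, of size $\kappa := \max(\left|\F\right|, \left|I\right|)$. Identifying $\omega \times \omega$ with $\omega$, I would consider
$$
\mathcal{A} = \{\mathrm{graph}(h) : h \in \mathcal{E}\} \cup \{C_n : n \in \omega\}, \qquad C_n = \{n\} \times \omega,
$$
and claim that $\mathcal{A}$ is a maximal almost disjoint family of size $\kappa$. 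Almost disjointness is immediate and purely combinatorial: distinct members of $\mathcal{E}$ are eventually different, so their graphs meet in a finite set; two distinct columns are disjoint; and a graph meets each column in at most one point. Since the columns contribute only countably many sets, $\left|\mathcal{A}\right| = \kappa$.

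For maximality, let $\dot A$ name an infinite subset of $\omega \times \omega$. If some column meets $A$ in an infinite set we are done, so assume every column-section $A \cap C_n$ is finite; as $A$ is infinite, infinitely many of these are nonempty, and choosing one point from each yields an infinite partial function $p \subseteq A$. It then suffices to find $h \in \mathcal{E}$ with $h(n) = p(n)$ for infinitely many $n \in \dom p$, for then $\mathrm{graph}(h) \cap A$ is infinite. Thus the lemma reduces to showing that $\mathcal{E}$ catches every infinite partial function of $V[G]$, i.e. to the maximality against infinite partial functions from the definition of a Van Douwen family.

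This catching step is the heart of the matter and the one I expect to be the main obstacle, precisely because a maximal eventually different family need not be Van Douwen. I would prove it by a density argument directly parallel to the one witnessing that $\mathcal{E}$ is maximal eventually different. Fix a nice name $\dot p$ for an infinite partial function. Since conditions of $\EE_\F(I)$ are finite and the forcing is c.c.c., $\dot p$ mentions only countably many coordinates of the generic, so --- and here is where the hypothesis that $I$ is uncountable enters --- we may fix an index $j \in I$ not occurring in $\dot p$. If $\dot p$ agrees infinitely often with some $f \in \F$ we are already done; otherwise, below any condition forcing $\dot p$ to be eventually different from every member of $\F$, the sets ``$e_j$ agrees with $\dot p$ at some point beyond $N$'' are dense: given a condition with finite side-condition set $F \subseteq \F$, for large $n \in \dom p$ the value $\dot p(n)$ avoids the finitely many values $\{f(n) : f \in F\}$ by eventual difference, and since $j$ does not occur in $\dot p$ we may first decide $\dot p(n) = k$ and then legally extend the generic by setting $e_j(n) := k$. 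Meeting these dense sets forces $e_j(n) = \dot p(n)$ for infinitely many $n$, so $\dot p$ is caught.

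What remains is to verify that such extensions are compatible with the other requirements defining $\EE_\F(I)$ --- totality of $e_j$ and eventual difference of $e_j$ from the remaining generics --- which I expect to be routine: the agreement points $n$ may be chosen fresh, and having reduced to the case where $\dot p$ is eventually different from every member already present, setting $e_j(n) = \dot p(n)$ at large $n$ never forces an illegal coincidence. Assembling these steps shows that $\mathcal{A}$ is maximal almost disjoint of size $\kappa$ in $V[G]$, which is exactly the assertion $\EE_\F(I) \forces \max(\left|\F\right|, \left|I\right|) \in \spec(\a)$.
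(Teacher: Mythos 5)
Your reduction of the lemma is correct and is a genuinely different decomposition from the paper's: the paper glues countably many generics into a single real $g$ and takes the mad family of agreement sets $E^h_g=\{n: h(n)=g(n)\}$, $h\in\F\cup\dot{\F}_\gen$ (using Proposition~\ref{PROP_VanDouwen_Implies_Mad}), whereas you take graphs plus columns in $\omega\times\omega$, which makes the size computation automatic. But both routes bottom out at the same key fact, that $\F\cup\dot{\F}_\gen$ catches every infinite partial function of $V[G]$; the paper gets this for free from its Lemma~\ref{LEM_Force_VanDouwen} (stated without proof, attributed to density arguments as in Zhang), while you attempt to prove it, and your proof of this step contains a genuine error.

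The density claim at the heart of your catching argument is false as stated, because your dichotomy is taken over $\F$ instead of over $\F\cup\dot{\F}_\gen$. Concretely: fix $i_1\neq i_2\in I$ and let $\dot p$ name the function with $p(n)=e_{i_1}(n)$ for even $n$ and $p(n)=e_{i_2}(n)$ for odd $n$. Its support is $\{i_1,i_2\}$, and $p$ is forced to be eventually different from \emph{every} member of $\F$, since its agreement set with $f\in\F$ is contained in the union of the two finite sets $\{n:e_{i_1}(n)=f(n)\}$ and $\{n:e_{i_2}(n)=f(n)\}$; so your second case applies. Yet for any fresh $j$ the set $\{n:e_j(n)=p(n)\}$ is contained in $\{n:e_j(n)=e_{i_1}(n)\}\cup\{n:e_j(n)=e_{i_2}(n)\}$, which is finite because the generics are forced mutually eventually different --- so the sets ``$e_j$ agrees with $\dot p$ beyond $N$'' cannot all be dense. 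The correct dichotomy is ``$\dot p$ is almost covered by finitely many members of $\F\cup\{e_i: i\in J\}$'' (then pigeonhole yields a single member agreeing infinitely often) versus not. Moreover, even with the corrected dichotomy, your ``first decide $\dot p(n)=k$, then legally set $e_j(n):=k$'' move has an unresolved circularity: deciding $\dot p(n)=k$ requires extending the condition, which enlarges the side set $E$ and the support, and legality of adding $(j,n,k)$ must be checked against the \emph{enlarged} condition via clauses (2) and (3) of the extension relation --- values about which the hypothesis (whose eventual-difference thresholds are themselves generic) says nothing at the particular point $n$. The standard repair is structural rather than a direct density computation: show $\EE_\F(J)\completesubposet\EE_\F(I)$ for the countable support $J$ of the name (this itself needs an argument, since $\EE_\F(I)$ is not a product and the naive restriction map is not a projection --- a new value on a $J$-coordinate can collide with an existing value on an outside coordinate), pass to the intermediate model $V[G_J]$ where $p=\dot p[G_J]$ is a ground-model object, and only then run your density argument for a fresh coordinate $j$ in the quotient, where no deciding (hence no circularity) occurs. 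With that machinery --- or by simply invoking Lemma~\ref{LEM_Force_VanDouwen} as a black box, exactly as the paper does --- your graphs-and-columns route does yield a correct proof.
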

	
	Hence, the standard forcing for realizing a desired spectrum of $\aE$ also forces $\a$ to have the same spectrum and thus cannot be used to separate them.
	Further, as it is the case for $\a$ and $\aT$, in Theorem~\ref{THM_Closure} we show that Question~\ref{QUE_Closure} also has a positive answer for Van Douwen families:
	
	\begin{theorem*}
		$\spec(\av)$ is closed under singular limits.
	\end{theorem*}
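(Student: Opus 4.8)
Let me set up notation first. Write $\lambda = \operatorname{cf}(\kappa)$ and fix an increasing sequence $\langle \kappa_i : i < \lambda \rangle$ of cardinals in $\spec(\av)$ that is cofinal in $\kappa$, so that $\sum_{i<\lambda} \kappa_i = \kappa$; for each $i$ fix a Van Douwen family $\F_i$ of size $\kappa_i$. Following the pattern of Hechler's argument for $\a$ (and the analogous one for $\aT$), the plan is to spread these families over the pieces of a maximal almost disjoint family of \emph{domains} and to glue the resulting functions into a single eventually different family of size $\kappa$, and then to use the maximality of each piece to verify global maximality.

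Concretely, I would fix a maximal almost disjoint family $\{A_\xi : \xi < \mu\}$ on $\omega$ with $\lambda \le \mu \le \kappa$, together with bijections $\sigma_\xi \colon A_\xi \to \omega$, place $\F_i$ on $A_i$ for $i < \lambda$, and place a fixed Van Douwen family of size $\av$ on each remaining piece, so that every $A_\xi$ carries a Van Douwen family $\G_\xi$ of total functions $A_\xi \to \omega$. Almost disjointness makes functions from different pieces agree only on the finite sets $A_\xi \cap A_{\xi'}$, so across pieces eventual difference is automatic \emph{at the level of partial functions}. The new difficulty, absent in the $\a$-case where one glues subsets rather than total functions, is that each $g \in \G_\xi$ must be extended to a total function $f_{\xi,g}$ on all of $\omega$. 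I would do this by reserving disjoint infinite value sets $E, O \subseteq \omega$, letting the home part $f_{\xi,g}\upharpoonright A_\xi$ take values in $E$ and the off-home part $f_{\xi,g}\upharpoonright(\omega \setminus A_\xi)$ take values in $O$. The parity split ensures a home value and an off-home value never collide, so across-piece eventual difference survives totalization provided the off-home parts are chosen to be eventually different; within a piece, eventual difference of the home parts is inherited from $\G_\xi$, while that of the off-home parts must be arranged by hand. This produces an eventually different family $\F = \{f_{\xi,g}\}$ with $\sum_\xi |\G_\xi| = \kappa$ members.

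For maximality, given an infinite partial function $p$ I would split its domain according to whether $p$ takes values in $E$ or in $O$; at least one part is infinite, say the $E$-part $D$. Since $\{A_\xi\}$ is maximal almost disjoint, $D$ meets some $A_\xi$ in an infinite set, and transporting $p \upharpoonright (D \cap A_\xi)$ through $\sigma_\xi$ and the bijection $\omega \cong E$ yields an infinite partial function on $\omega$ which the Van Douwen family $\G_\xi$ catches, producing $g \in \G_\xi$ with $f_{\xi,g}$ equal to $p$ infinitely often. This is exactly where maximality against partial functions, and not merely maximal eventual difference, is used, and it is why the argument should go through for $\av$ while remaining out of reach for $\aE$.

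The crux will be the symmetric treatment of the off-home ($O$-valued) part: an infinite partial function concentrated on $O$ must also be caught, yet the off-home parts were introduced only to totalize and to preserve eventual difference, and demanding that they be simultaneously eventually different and maximal against $O$-valued partial functions is the original problem all over again, transplanted to $O$. The plan to break this apparent circularity is to build the off-home parts as a second copy of the entire construction, carried by the \emph{same} maximal almost disjoint scaffold, so that $\F$ is maximal against $E$-valued partial functions through its home parts and against $O$-valued partial functions through its off-home parts at once; fitting both roles into one family of total functions, while keeping the total size exactly $\kappa$, is the main technical obstacle. A secondary point, which matters only when $\lambda$ is uncountable, is to secure a scaffold of some size $\mu \in [\lambda, \kappa]$: one needs $\mu \ge \lambda$ to place the $\lambda$ families $\F_i$ on distinct pieces, and $\mu \le \kappa$ so that the padding pieces contribute at most $\mu \cdot \av \le \kappa$ and the total remains $\kappa$.
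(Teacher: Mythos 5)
Your proposal is not a proof: at its decisive step it names the obstacle instead of overcoming it. Once you decompose $\omega$ into a MAD scaffold $\{A_\xi\}$ of domains and totalize each transported family with $E$-valued home parts and $O$-valued off-home parts, you must also catch every infinite partial function taking values in $O$, and, as you yourself say, this forces the off-home parts to form simultaneously an eventually different family and a family maximal against $O$-valued partial functions --- that is, a Van Douwen-type family of size $\kappa$, which is exactly what is being constructed. The proposed escape (``a second copy of the entire construction on the same scaffold'') is not carried out, and it is unclear that it can be: the off-home domains $\omega \setminus A_\xi$ are pairwise almost \emph{equal}, not almost disjoint, so the $\kappa$-many off-home parts must be made pairwise eventually different by hand on what is essentially one common domain while each also realizes a prescribed catching role; and the second copy's own off-home parts regenerate the identical demand, so the regress does not visibly close. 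Your ``secondary point'' is also a genuine gap: you need a mad family of size $\mu$ with $\cof(\kappa) \leq \mu \leq \kappa$, and nothing supplies one --- it is not even known that $\spec(\av) \subseteq \spec(\a)$, so the given families $\F_i$ do not obviously yield such a scaffold. (Hechler's argument for $\a$ avoids this by using the first mad family itself as the scaffold; that move is unavailable to you because your scaffold consists of sets while your building blocks are functions.)

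The paper's proof sidesteps all of this by decomposing the \emph{value space} instead of the domain: it builds a family of functions $\omega \to \omega \times \omega$ (legitimate by the paper's remark on other domains). With $\kappa = \cof(\lambda) < \lambda_0 = |\F_0|$, fix distinct tags $\G = \{g_\alpha : \alpha < \kappa\} \subseteq \F_0$ and take the functions $g_\alpha \times f$ for $f \in \F_\alpha$, together with $f_0 \times f_1$ for $f_0 \in \F_0 \setminus \G$ and $f_1 \in \F_0$. Eventual difference is checked coordinatewise, and Van Douwen-ness follows by projecting: given $h : A \to \omega \times \omega$, first catch the first coordinate $p_0(h)$ by $\F_0$ on some infinite $B \subseteq A$; the catching function is either a tag $g_\alpha$, in which case $\F_\alpha$ catches $p_1(h)$ on an infinite $C \subseteq B$, or some $f_0 \notin \G$, in which case $\F_0$ catches it. Totalization is automatic (products of total functions are total), there are no off-home parts, and $\F_0$ itself plays the scaffold role you wanted a MAD family of sets to play. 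If you want to salvage your outline, that is the lesson to import: build the scaffold out of the given Van Douwen families, in the range rather than in the domain.
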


	Clearly, we have that $\spec(\av) \subseteq \spec(\aE)$.
	One of the central open questions regarding Van Douwen families is if we always have equality:
	
	\begin{question}
		Does $\spec(\av) = \spec(\aE)$ hold?
	\end{question}

	Notice, that a positive answer together with our Theorem~\ref{THM_Closure} would yield a positive answer for the well-known open Question~\ref{QUE_Closure}.
	Moreover, in order to answer Question~\ref{QUE_A_Leq_Ae} the following weaker version would suffice:

	\begin{question}
		Does $\av = \aE$ hold?
	\end{question}

	Towards an answer to this question, it is interesting to also study non Van Douwen families as well as their indestructibility via forcing.
	For any maximal eventually different family $\F$ in \cite{Raghavan_2010} Raghavan defined an associated ideal $\I_0(\F)$ which measures how far the family is from being Van Douwen (see Definition~\ref{DEF_Associated_Ideal}).
	This ideal is always proper and contains $\text{Fin}$ (see Proposition~\ref{PROP_Associated_Ideal}) and all ideals in this paper are assumed to be of this form.
	We prove that under {\sf CH} any ideal may be realized as the associated ideal of some maximal eventually different family (see Theorem~\ref{THM_Realize_Ideal}), i.e.\ there are many different maximal eventually different but non Van Douwen families:
	
	\begin{theorem*}
		Assume {\sf CH} and let $\I$ be an ideal.
		Then there is a maximal eventually different family such that $\I = \I_0(\F)$.
	\end{theorem*}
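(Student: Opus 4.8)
The plan is to run a transfinite recursion of length $\omega_1$, using {\sf CH} to enumerate everything that must be handled, and first I would unwind what $\I_0(\F) = \I$ actually demands. Recall that for infinite $D$ one has $D \in \I_0(\F)$ exactly when there is a partial function $p \colon D \to \omega$ eventually different from every member of $\F$; call such a $p$ an escaping function on $D$, and say a partial function is captured when some $f \in \F$ agrees with it infinitely often. Then $\I_0(\F)=\I$ splits into two tasks: (I) every infinite $D \in \I$ must carry an escaping function, and (II) for every $D \in \I^+$ every partial function with domain $D$ must be captured. Since $\omega \in \I^+$, task (II) applied to $D=\omega$ already says that every total function is captured, so maximality of $\F$ will come for free.

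Next I would set up the recursion. Using {\sf CH}, list in order type $\omega_1$ all infinite $D \in \I$ and all partial functions $p$ with $\dom(p) \in \I^+$, so that each occurs. Along the recursion I build an increasing chain of countable eventually different families $\F_\gamma$, whose union $\F$ is the target, and — separately — an auxiliary escaping function $q_D$ for each $D \in \I$; the $q_D$ witness (I) but are never put into $\F$. The structural comfort is that each $\gamma < \omega_1$ is countable, so at every stage only countably many family members and escaping functions exist, and each single step is a countable diagonalisation. An escape step for $D$ produces $q_D$ eventually different from the countably many functions currently in $\F_\gamma$ by the usual trick (list them as $h_i$, enumerate $D$ as $d_0 < d_1 < \cdots$, and pick $q_D(d_k) \notin \{h_i(d_k) : i \le k\}$); every family member added later will in turn be built to avoid $q_D$, so $q_D$ ends up escaping all of $\F$.

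The substance is in the capture step for a $p$ with $\dom(p) = D \in \I^+$, which I may assume is not yet captured, so $p$ is eventually different from the whole current family. I would list as $h_0, h_1, \dots$ the current members of $\F_\gamma$ together with the escaping functions $q_{D'}$ already built, record the agreement sets $G_j = \{ n \in D \cap \dom(h_j) : p(n) = h_j(n)\}$, fix an infinite $S \subseteq D$ almost disjoint from every $G_j$, set the new function $f$ equal to $p$ on $S$, and define $f$ off $S$ by diagonalising $f(n) \notin \{h_j(n) : j \le n\}$. Then $f$ captures $p$, since they agree on the infinite set $S$, and $f$ is eventually different from every $h_j$: off $S$ by construction, and on $S$ because there $f=p$ while $S \cap G_j$ is finite — for the old family members this is automatic (if $p$ is not captured then $G_j$ is already finite), so the only real work is against the escaping functions.

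The hard part — and the sole place the hypothesis on $\I$ is used — is producing that infinite $S \subseteq D$ almost disjoint from all the $G_j$, and this is exactly where the ideal/coideal split of the domains is decisive. The threatening constraints are the escaping functions $q_{D'}$, whose domains $D'$ lie in $\I$, so each corresponding $G_j$ sits inside a member of $\I$; hence every finite union $G_0 \cup \dots \cup G_k$ lies in $\I$, and since $D \in \I^+$ the difference $D \setminus (G_0 \cup \dots \cup G_k)$ is still positive, in particular infinite, which lets me choose the points of $S$ one at a time. Granting this, running through all $\omega_1$ tasks secures (I), since each $q_D$ is eventually different from every member of $\F$, and (II), since every partial function with positive domain is listed and captured; hence $\I_0(\F) = \I$. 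I expect the residual bookkeeping — checking that each task is eventually addressed and that the finitely many constraints imposed at each point never clash — to be routine.
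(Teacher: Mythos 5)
Your proposal is correct and takes essentially the same route as the paper: the same {\sf CH}-driven transfinite recursion in which your escaping functions $q_D$ play exactly the role of the paper's witnesses $g_\alpha$, your capture step is the content of the paper's Lemma~\ref{LEM_Diagonalize_With_Witnesses}, and both arguments hinge on the same key fact that an $\I$-positive set cannot be covered by finitely many members of $\I$ together with a finite set. The differences are purely organizational (you separate escape and capture stages and choose the agreement set $S$ up front, whereas the paper adds a family member and a witness at every stage and interleaves the agreement points with the totalization).
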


	Finally, towards the potential consistency of $\aE < \av$ we also show that the associated ideal $\I_0(\F)$ of a maximal eventually different family $\F$ may exhibit some forcing indestructibility, i.e.\ in the forcing extension $V[G]$ we have that $(\I_0(\F))^{V[G]}$ is the ideal generated by $(\I_0(\F))^V$. Note, that this also implies that the maximality of $\F$ is preserved.
	In particular, under {\sf CH} we show that the associated ideals may also be Sacks-indestructible (see Theorem~\ref{THM_Realize_Ideal_Sacks}):
	
	\begin{theorem*}
		Assume {\sf CH} and let $\I$ be an ideal.
		Then there is a maximal eventually different family such that $\I = \I_0(\F)$ and $\I_0(\F)$ is indestructible by any countable support iteration or product of Sacks-forcing.
	\end{theorem*}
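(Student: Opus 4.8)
The plan is to superimpose a fusion argument onto the {\sf CH}-recursion that proves Theorem~\ref{THM_Realize_Ideal}. Because the statement quantifies over \emph{every} countable support iteration and product of Sacks forcing, I would not diagonalise against names directly (there may be more than $\aleph_1$ of them); instead I would isolate a single combinatorial property $(\ast)$ of an eventually different family, prove that $(\ast)$ yields the desired indestructibility for a single Sacks forcing, and then lift this to arbitrary iterations and products via the continuous reading of names. Property $(\ast)$ says: for every perfect tree $T \subseteq 2^{<\omega}$ and every continuous $\Phi \colon [T] \to \omega^\omega$ coded in the ground model --- and, for the ideal, every prescription of a target domain --- there is a member $f \in \F$ with $\{ n : f(n) = \Phi(x)(n) \}$ infinite for \emph{every} branch $x \in [T]$, the agreements being taken inside the prescribed domain.

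The engine is that $(\ast)$ lets one defeat every threat by a fusion. Given a Sacks condition $p$ and a name $\dot g$ for an infinite partial function forced to escape $\F$, the fusion theorem reads $\dot g$ continuously: below some $q \leq p$ one obtains, for each $s \in 2^{<\omega}$, a piece $q^{s}$ of $q$ determined by its first $|s|$ splitting levels which decides a value of $\dot g$ at a fresh coordinate $n_{s} \in \dom(\dot g)$, the $n_{s}$ pairwise distinct. Setting $f(n_{s})$ to be the value $q^{s}$ forces for $\dot g(n_{s})$, every branch $x \in [q]$ passes through one piece per level and hence realises $f(n_{x \upharpoonright k}) = \dot g(n_{x \upharpoonright k})$ for every $k$, so $q \forces \{ n : f(n) = \dot g(n) \}$ is infinite and the threat is killed. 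When $\dom(\dot g)$ is forced into an $\I$-set one chooses the coordinates $n_{s}$ inside that set, so that the instance of $(\ast)$ for this continuous reading produces a member of $\F$ catching $\dot g$; applied to total functions, the same yields preservation of maximality.

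I would then build $\F = \{ f_\alpha : \alpha < \omega_1 \}$ by recursion, using {\sf CH} to enumerate in order type $\omega_1$ all ground-model labelled perfect trees (there are only $\aleph_1$, each coded by a real) together with all infinite partial functions. At stage $\alpha$ the new member $f_\alpha$ is produced by the scheme above so that it is eventually different from every earlier $f_\beta$, it witnesses $(\ast)$ for the current tree whenever the associated domain lies in $\I$ (and for every total function), and it does not agree infinitely often with any of the partial functions $e_A$ designated, one per $A \in \I^{+}$, to witness $A \notin \I_0(\F)$ exactly as in Theorem~\ref{THM_Realize_Ideal}. This gives $\I_0(\F) = \I$ in $V$. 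In any single Sacks extension, $(\ast)$ and the catching lemma give $\langle \I \rangle \subseteq (\I_0(\F))^{V[G]}$ and preservation of maximality, while the $e_A$, lying in $V$ and escaping absolutely, give the reverse inclusion on ground-model positive sets; the passage to an arbitrary countable support iteration or product is then obtained by the standard fusion and continuous reading of names for such forcings, which present every name for a partial function, below a suitable condition, as a labelled tree of pieces to which $(\ast)$ again applies, followed by a preservation induction.

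The main obstacle is the bookkeeping inside the single fusion of the second paragraph. The reserved coordinates pin down $f_\alpha$ on an infinite sparse set with values dictated by $\dot g$, and I must ensure these forced values neither spoil eventual difference from the countably many earlier members nor create infinite agreement with a designated witness $e_A$; the remedy is to refine each piece $q^{s}$ --- using that $\dot g$ is forced eventually different from each earlier member --- so that $n_{s}$ is large and its decided value avoids the finitely many relevant earlier functions, after which the free coordinates complete the diagonalisation. The second, more structural difficulty is uniformity: a \emph{single} family of size $\aleph_1$ must catch names arising in iterations and products that may add more than $\aleph_1$ reals, and the $\langle \I \rangle$-positive sets appearing only in the extension must still carry escaping partial functions. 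Both are handled by the reduction of every iteration or product name to a ground-model labelled tree via continuous reading of names, which is the technical heart of the argument and the place where the special fusion structure of Sacks forcing is indispensable.
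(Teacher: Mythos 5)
Your treatment of a single Sacks forcing is sound, and it is essentially the paper's argument in different packaging: your property $(\ast)$ for ground-model perfect trees and continuous readings is the tree formulation of diagonalising against pairs (condition, nice name), and your fusion catching lemma is the one-coordinate case of the paper's Lemma~\ref{LEM_Diagonalize_With_Sacks_Witnesses}. The genuine gap is the lift. Indestructibility under a single copy of $\SS$ does not, by your method, imply indestructibility under iterations or products, and your ``preservation induction'' breaks at the very first successor step: for the two-step iteration $\SS \ast \dot{\SS}$, the threats living in $V[g_0]$ are read continuously from trees and codes in $V[g_0]$, not in $V$, so a family built to satisfy $(\ast)$ only for ground-model-coded readings never gets to see them. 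Equivalently, a continuous reading of an $\SS \ast \dot{\SS}$-name below a condition is organised by a two-dimensional fusion on a product of perfect sets, not by a single ground-model labelled tree, so $(\ast)$ as you state it simply does not apply. This is exactly why the paper strengthens the diagonalisation from the start to names for the countable \emph{product} $\SS^{\aleph_0}$ --- Lemma~\ref{LEM_Diagonalize_With_Sacks_Witnesses} is a fusion lemma for $\SS^{\aleph_0}$ with suitable functions --- and then invokes Corollary~\ref{COR_Indestructibility_Implication} from \cite{FischerSchembecker_2023}, which says that $\SS^{\aleph_0}$-indestructibility of the associated ideal implies indestructibility by every countable support iteration and product of Sacks forcing. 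That reduction is a theorem with real content, not an instance of ``standard continuous reading of names'', and your outline contains no substitute for it; to repair the proposal you would have to formulate $(\ast)$ for countable products and still prove or cite the reduction to arbitrary lengths.

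Separately, you have the roles of $\I$ and $\I^{+}$ systematically reversed. The designated escaping witnesses must be chosen one per $A \in \I$ --- they witness $A \in \I_0(\F)$, and since eventual difference is absolute they survive into any extension, giving $\langle\I\rangle \subseteq \I_0(\F)^{V[G]}$ --- while the catching must target every name $\dot{h}$ whose domain is forced to be $\langle\I\rangle$-\emph{positive}, which gives $\I_0(\F)^{V[G]} \subseteq \langle\I\rangle$ together with maximality. As written (witnesses $e_A$ for $A \in \I^{+}$, catching when $\dom(\dot{g})$ is forced into an $\I$-set) the two tasks collide: the check-name of a witness $e_A$ is itself a ground-model-coded partial function whose domain lies in the relevant set, so your catching clause would force some $f \in \F$ to agree with $e_A$ infinitely often, contradicting the requirement that $e_A$ stay eventually different from $\F$; moreover positive sets would acquire escaping witnesses, so $\I_0(\F)$ would contain $\I^{+}$ and could not equal $\I$. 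This may be a transcription slip, but fixing it is necessary for the construction to make sense, and the two inclusion attributions in your final paragraph must be swapped accordingly.
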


	As an immediate corollary (see Corollary~\ref{COR_Realize_Ideal_Sacks_Model}) all $\aleph_1$-generated ideals are realized in the iterated Sacks-model:

	\begin{corollary*}
		In the iterated Sacks-model for every $\aleph_1$-generated ideal $\I$ there is a maximal eventually different family $\F$ such that $\I = \I_0(\F)$.
	\end{corollary*}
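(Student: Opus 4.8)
The plan is to deduce the corollary from Theorem~\ref{THM_Realize_Ideal_Sacks} by a reflection argument internal to the iterated Sacks-model. Write this model as $V[G]$, where $G$ is generic for the countable support iteration $\PP_{\omega_2} = \seq{\PP_\alpha, \dot{\QQ}_\alpha}{\alpha < \omega_2}$ of Sacks-forcing over a ground model $V \models {\sf GCH}$, so that $\c = \aleph_2$ in $V[G]$. I would invoke the standard facts about this iteration: it is proper and $\omega^\omega$-bounding, it satisfies the $\aleph_2$-chain condition because $V \models {\sf CH}$, and {\sf CH} holds in every intermediate extension $V[G_\alpha]$ with $\alpha < \omega_2$.

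Now fix an $\aleph_1$-generated ideal $\I$ of $V[G]$ and a generating family $\set{A_\xi}{\xi < \omega_1}$ for it; since $\I \supseteq \Fin$ we may assume that every finite set occurs among the $A_\xi$. Each $A_\xi$ is a real of $V[G]$, and by the $\aleph_2$-chain condition a nice name for a real involves at most $\aleph_1$ conditions, each of countable support, so every real of $V[G]$ already belongs to some intermediate extension $V[G_{\beta}]$ with $\beta < \omega_2$. As $\operatorname{cf}(\omega_2) = \omega_2 > \omega_1$, the supremum $\alpha := \sup_{\xi < \omega_1} \beta_\xi$ of the stages $\beta_\xi$ at which $A_\xi$ first appears is still below $\omega_2$, whence $\set{A_\xi}{\xi<\omega_1} \subseteq V[G_\alpha]$. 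Let $\J$ be the ideal generated by $\set{A_\xi}{\xi<\omega_1}$ as computed in $V[G_\alpha]$; then $\J \supseteq \Fin$, and $\J$ is proper because cofiniteness of a finite union of generators is absolute, so properness of $\J$ is inherited from that of $\I$.

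Working in $V[G_\alpha] \models {\sf CH}$, I apply Theorem~\ref{THM_Realize_Ideal_Sacks} to $\J$ to obtain a maximal eventually different family $\F \in V[G_\alpha]$ with $\I_0(\F) = \J$ and with $\I_0(\F)$ indestructible by any countable support iteration of Sacks-forcing. The iteration factors as $\PP_{\omega_2} \cong \PP_\alpha \forcingconcat \dot{\PP}_{[\alpha, \omega_2)}$, where the tail $\dot{\PP}_{[\alpha,\omega_2)}$ is forced to be exactly such an iteration over $V[G_\alpha]$. Hence by Sacks-indestructibility, in $V[G]$ the family $\F$ remains maximal eventually different and $(\I_0(\F))^{V[G]}$ equals the ideal generated by $(\I_0(\F))^{V[G_\alpha]} = \J$. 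But the ideal generated by $\J$ in $V[G]$ is generated by $\set{A_\xi}{\xi<\omega_1}$, which is precisely $\I$. Therefore $\I_0(\F) = \I$ in $V[G]$, as required.

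The only delicate point is the reflection step: capturing all $\aleph_1$ generators of $\I$ at a common stage $\alpha < \omega_2$, and recognizing the remaining iteration as a countable support iteration of Sacks-forcing to which Theorem~\ref{THM_Realize_Ideal_Sacks} applies. Both are routine properties of the iterated Sacks-model — the capturing from the $\aleph_2$-chain condition together with $\operatorname{cf}(\omega_2) > \omega_1$, and the factorization from the fact that a tail of a countable support iteration of Sacks-forcing is again such an iteration — so no new combinatorial work is needed beyond the two preceding theorems.
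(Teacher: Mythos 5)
Your proposal is correct and follows essentially the same route as the paper: reflect the $\aleph_1$ generators to an intermediate stage $V[G_\alpha]$ (which satisfies {\sf CH}), apply Theorem~\ref{THM_Realize_Ideal_Sacks} there, and use the Sacks-indestructibility of $\I_0(\F)$ under the tail of the iteration to conclude $\I_0(\F) = \I$ in the final model. The only difference is cosmetic: you spell out the capturing argument (via the $\aleph_2$-chain condition and $\operatorname{cf}(\omega_2) > \omega_1$) and the properness of the reflected ideal, which the paper simply asserts.
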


	\section{Preliminaries} \label{SEC_Preliminaries}

	\begin{definition}\label{DEF_MED}
		We say $f, g \in \bairespace$ are eventually different iff $\set{n < \omega}{f(n) = g(n)}$ is finite.
		A family $\F \subseteq \bairespace$ is called eventually different (e.d.) iff all $f \neq g \in \F$ are eventually different.
		It is called maximal (m.e.d.) iff it is maximal with respect to inclusion.
		Finally, we define the associated spectrum and cardinal characteristic
		\begin{align*}
			\spec(\aE) &:= \set{\left|\F\right|}{\F \text{ is a m.e.d.\ family}},\\
			\aE &:= \min(\spec(\aE)).
		\end{align*}
	\end{definition}
	
	\begin{remark}\label{REM_MED_Other_Domains}
		For any countably infinite $A, B$ we may equivalently consider m.e.d.\ families $\F \subseteq \functionspace{A}{B}$ by using bijections with $\omega$.
		In most cases $A, B = \omega$, however we will also consider the cases $A \in \infsubset{\omega}$ and $B = \omega \times \omega$.
	\end{remark}
	
	\begin{definition}\label{DEF_VanDouwen}
		Let $\F \subseteq \bairespace$ and $A \in \infsubset{\omega}$.
		Then we define $\F \restr A := \set{f \restr A}{f \in \F}$.
		We call $\F$ Van Douwen iff $\F \restr A$ is a m.e.d.\ family for all $A \in \infsubset{\omega}$.
		Analogously, we define the associated spectrum and cardinal characteristic
		\begin{align*}
			\spec(\av) &:= \set{\left|\F\right|}{\F \text{ is Van Douwen}},\\
			\av &:= \min(\spec(\av)).
		\end{align*}
	\end{definition}

	Clearly, we have $\spec(\av) \subseteq \spec(\aE)$, so also $\aE \leq \av$.
	Unlike as for other notions of strong maximality, such as $\omega$-maximality or tightness, Raghavan \cite{Raghavan_2010} proved that there always exists a Van Douwen family of size $\c$, i.e.\ the cardinal characteristic $\av$ is well-defined.
	Next, we present a short argument of the well-known fact that $\a \leq \av$ holds.
	
	\begin{definition} \label{DEF_Covered_reals}
		Let $\F$ be an e.d.\ family.
		Then we define
		$$
			\cov(\F) := \set{g \in \bairespace}{\exists \F_0 \in \finsubset{\F}\ \exists N < \omega \ \forall n \geq N \ \exists f \in \F_0 \ f(n) = g(n)},
		$$
		i.e.\ $g \in \cov(\F)$ iff its graph can almost be covered by finitely many elements of $\F$.
		Further, we set $\cov^+(\F) := \bairespace \setminus \cov(\F)$.
	\end{definition}
	
	\begin{proposition} \label{PROP_Uncovered_real}
		Let $\F$ be an e.d.\ family, $g \in \bairespace$, $\F_1 \in \infsubset{\F}$ and assume $g =^\infty f$ for every $f \in \F_1$.
		Then $g \in \cov^+(\F)$.
	\end{proposition}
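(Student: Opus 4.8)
The plan is to prove the contrapositive: I would assume $g \in \cov(\F)$ and show that then $g$ can agree infinitely often with only finitely many members of $\F$, which is incompatible with the hypothesized infinite family $\F_1$.

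So suppose $g \in \cov(\F)$, and fix witnesses for this: a finite set $\F_0 \in \finsubset{\F}$ and a threshold $N < \omega$ such that for every $n \geq N$ there is some $f \in \F_0$ with $f(n) = g(n)$. Since $\F_1$ is infinite and $\F_0$ is finite, I can choose some $f^* \in \F_1 \setminus \F_0$. The hypothesis $g =^\infty f^*$ means that the set $A := \set{n \geq N}{g(n) = f^*(n)}$ is infinite, and on $A$ the covering property supplies, for each $n$, some $f \in \F_0$ with $f(n) = g(n) = f^*(n)$.

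The crucial step is then a pigeonhole argument: because $\F_0$ is finite while $A$ is infinite, some fixed $f \in \F_0$ must satisfy $f(n) = f^*(n)$ for infinitely many $n \in A$, i.e.\ $f =^\infty f^*$. Since $\F$ is eventually different and $f, f^* \in \F$, this forces $f = f^*$, contradicting $f \in \F_0$ together with $f^* \notin \F_0$. This contradiction shows $g \notin \cov(\F)$, that is, $g \in \cov^+(\F)$.

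I do not expect a genuine obstacle here; the only points needing care are organizational. One must first intersect the agreement set of $g$ and $f^*$ with $\set{n}{n \geq N}$ so as to stay in the range where the covering witness $\F_0$ is guaranteed effective, and only then distribute the resulting infinite set over the finitely many elements of $\F_0$. The role of $\F_1$ being infinite is precisely to furnish a witness $f^*$ lying outside the finite set $\F_0$; a single such $f^*$ already suffices, so the full infinitude of $\F_1$ is not otherwise used.
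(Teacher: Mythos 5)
Your proof is correct and is essentially the paper's argument run in contrapositive form: both hinge on choosing $f^* \in \F_1 \setminus \F_0$ (possible since $\F_1$ is infinite and $\F_0$ is finite) and playing the infinite agreement $g =^\infty f^*$ against the eventual difference of $f^*$ from the members of $\F_0$. The paper argues directly --- past a threshold $N_1$ the function $f^*$ differs from all of $\F_0$ simultaneously, so any agreement point of $g$ with $f^*$ beyond $N_1$ escapes $\F_0$ --- whereas you assume the covering and pigeonhole the infinite agreement set over the finite $\F_0$; these are interchangeable packagings of the same idea.
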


	\begin{proof}
		Let $\F_0 \in \finsubset{\F}$ and $N_0 < \omega$.
		Choose $f \in \F_1 \setminus \F_0$.
		Since $\F$ is e.d.\ choose $N_1 \geq N_0$ such that $f(n) \neq f_0(n)$ for all $f_0 \in \F_0$ and $n \geq N_1$.
		Since $g =^\infty f$ choose $n \geq N_1$ such that $f(n) = g(n)$.
		Hence, $g(n) \neq f_0(n)$ for all $f_0 \in \F_0$.
		Thus, $g \in \cov^+(\F)$.
	\end{proof}
	
	\begin{proposition}\label{PROP_VanDouwen_Implies_Mad}
		Let $\F$ be a Van Douwen family and $g \in \bairespace$.
		For every $f \in \F$ define
		$$
			E_g^f := \set{n < \omega}{f(n) = g(n)}.
		$$
		Then, $\A^\F_g := \set{E_g^f}{f \in \F \text{ such that } E_g^f \in \infsubset{\omega}}$ is a (possibly finite) mad family.
		Further, $\A_g^\F$ is infinite iff $g \in \cov^+(\F)$.
	\end{proposition}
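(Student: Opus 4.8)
The plan is to verify the two defining properties of a mad family for $\A_g^\F$ — pairwise almost disjointness and maximality — and then to characterize when it is infinite via $\cov^+(\F)$. Almost disjointness is immediate from the eventual-difference hypothesis: for $f \neq f' \in \F$ we have $E_g^f \cap E_g^{f'} \subseteq \set{n < \omega}{f(n) = f'(n)}$, and the right-hand side is finite because $\F$ is e.d. In particular any two distinct infinite agreement sets meet finitely, and the map $f \mapsto E_g^f$ is injective on those $f$ for which $E_g^f$ is infinite (otherwise two equal infinite sets would fail to be almost disjoint).

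The heart of the argument is maximality, and this is exactly where the Van Douwen hypothesis enters. Given any $B \in \infsubset{\omega}$, I would restrict to $B$: since $\F$ is Van Douwen, $\F \restr B$ is a m.e.d.\ family on the countable domain $B$ (Definition~\ref{DEF_VanDouwen} together with Remark~\ref{REM_MED_Other_Domains}). Applying the maximality of $\F \restr B$ to the function $g \restr B$ yields some $f \in \F$ for which $\set{n \in B}{g(n) = f(n)} = E_g^f \cap B$ is infinite. Then $E_g^f$ is infinite, so $E_g^f \in \A_g^\F$, and $B$ is not almost disjoint from it. As $B$ was arbitrary, no infinite set can be added to $\A_g^\F$, which is therefore maximal; taking $B = \omega$ in particular shows $\A_g^\F \neq \emptyset$, so it is a genuine (possibly finite) mad family.

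For the final equivalence I would argue both directions by contraposition, using the maximality just established. If $g \in \cov(\F)$, fix a finite $\F_0 = \{\enumone{f}{k}\}$ and $N < \omega$ witnessing this. Then for any $f \in \F \setminus \F_0$ and any $n \geq N$, the equality $g(n) = f(n)$ forces $f(n) = f_i(n)$ for some $i$ (since $g(n) \in \{f_1(n), \dots, f_k(n)\}$ for $n \geq N$), which happens only finitely often by eventual difference; hence $E_g^f$ is finite for all but the finitely many $f \in \F_0$, so $\A_g^\F$ is finite. Conversely, if $\A_g^\F = \{E_g^{f_1}, \dots, E_g^{f_k}\}$ is finite, set $C := \omega \setminus \bigcup_{i \leq k} E_g^{f_i}$, which is disjoint from every member of $\A_g^\F$. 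By the maximality proved above, $C$ cannot be infinite, so $\bigcup_{i \leq k} E_g^{f_i}$ is cofinite; fixing $N$ with $[N, \omega) \subseteq \bigcup_{i \leq k} E_g^{f_i}$ shows precisely that $f_1, \dots, f_k$ witness $g \in \cov(\F)$.

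The only real obstacle is the maximality step, and the key realization is that Van Douwenness is tailored exactly to it: restricting the family to the candidate set $B$ converts the statement ``no infinite set is almost disjoint from $\A_g^\F$'' into ``$\F \restr B$ is not extended by $g \restr B$'', which the hypothesis hands us for free. Everything else reduces to bookkeeping with finite sets.
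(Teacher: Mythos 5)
Your proof is correct and follows essentially the same route as the paper's: almost disjointness from eventual difference, maximality by restricting to the candidate infinite set and invoking Van Douwenness of $\F \restr B$, and both directions of the $\cov^+$-equivalence via the finiteness/cofiniteness of $\bigcup_{f \in \F_0} E^f_g$. The only difference is expository — you spell out the pigeonhole step and the ``finite mad family has cofinite union'' step that the paper states tersely.
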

	
	\begin{proof}
		Note that $\A^\F_g$ is almost disjoint as for all $f \neq f' \in \F$ we have that $f$ and $f'$ are eventually different.
		Hence, $E^f_g \cap E^{f'}_g$ is finite.
		Next, towards maximality of $A^{\F}_g$, let $A \in \infsubset{\omega}$.
		Since $\F$ is Van Douwen choose $f \in \F$ and $B \in \infsubset{A}$ such that $f \restr B = g \restr B$.
		Thus, $B \subseteq E^f_g$, which shows that $E^f_g \in \A_g^\F$ and $A \cap E^f_g$ is infinite.
		
		Now, assume $g \in \cov(\F)$.
		Choose $\F_0 \in \finsubset{\F}$ as in the definition of $\cov(\F)$ and let $f \in \F \setminus \F_0$.
		But if $E^f_g \in \infsubset{\omega}$ there would be a $f_0 \in \F_0$ such that $E^f_g \cap E^{f_0}_g$ is infinite, i.e.\ $f$ and $f_0$ are not eventually different, a contradiction.
		Thus, $E^f_g \notin \A_g^\F$ and hence $\A_g^\F$ is finite.
		
		Finally, assume $\A_g^\F$ is finite, so choose $\F_0 \in \finsubset{\F}$ with $\A_g^\F = \set{E^f_g}{f \in \F_0 \text{ and } E^f_g \in \infsubset{\omega}}$.
		Since $\A_g^\F$ is maximal we have $\omega \pseudosubseteq \bigcup_{f \in \F_0}E^f_g$.
		Hence, $g \in \cov(\F)$ is witnessed by $\F_0$.
	\end{proof}
	
	\begin{corollary}\label{COR_A_LEQ_AV}
		$\a \leq \av$.
	\end{corollary}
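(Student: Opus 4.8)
The plan is to extract from a size-minimal Van Douwen family a mad family of no larger cardinality, using Proposition~\ref{PROP_VanDouwen_Implies_Mad}. First I would fix a Van Douwen family $\F$ with $\left|\F\right| = \av$. Since $\F$ is in particular a m.e.d.\ family, it must be infinite: given any finite $\simpleset{\enumzero{f}{k}} \subseteq \bairespace$, the function $n \mapsto 1 + \max_{i \leq k} f_i(n)$ is eventually (indeed everywhere) different from each $f_i$, so no finite e.d.\ family is maximal. Hence I may fix a countably infinite subfamily $\F_1 = \set{f_n}{n < \omega} \subseteq \F$.

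Next I would produce a single real witnessing $\cov^+(\F) \neq \emptyset$. To this end, partition $\omega$ into infinitely many infinite pieces $\langle A_n \mid n < \omega \rangle$ and define $g \in \bairespace$ by setting $g \restr A_n := f_n \restr A_n$ for each $n < \omega$. Then $g$ agrees with $f_n$ on the infinite set $A_n$, so $g =^\infty f_n$ for every $n < \omega$; that is, $\F_1$ witnesses the hypothesis of Proposition~\ref{PROP_Uncovered_real}, and therefore $g \in \cov^+(\F)$.

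At this point Proposition~\ref{PROP_VanDouwen_Implies_Mad} applies directly: since $g \in \cov^+(\F)$, the family $\A_g^\F$ is an \emph{infinite} mad family. As each member of $\A_g^\F$ is of the form $E_g^f$ for some $f \in \F$, I get $\left|\A_g^\F\right| \leq \left|\F\right| = \av$. Because $\A_g^\F$ is an infinite mad family, it contributes to $\spec(\a)$, so $\a \leq \left|\A_g^\F\right|$. Combining the two inequalities yields $\a \leq \av$.

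The only genuine point requiring care is guaranteeing that $\A_g^\F$ is infinite rather than finite; everything else is bookkeeping. But this is exactly what the equivalence ``$\A_g^\F$ infinite iff $g \in \cov^+(\F)$'' in Proposition~\ref{PROP_VanDouwen_Implies_Mad} reduces to, and the explicit partition construction above supplies a real in $\cov^+(\F)$ cheaply, so I do not anticipate any serious obstacle.
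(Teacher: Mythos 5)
Your proof is correct and follows essentially the same route as the paper: glue countably many members of $\F$ along an infinite partition of $\omega$ to get $g \in \cov^+(\F)$ via Proposition~\ref{PROP_Uncovered_real}, then invoke Proposition~\ref{PROP_VanDouwen_Implies_Mad} to extract an infinite mad family of size at most $\left|\F\right|$. The extra details you supply (that $\F$ is infinite, and the final cardinality bookkeeping) are points the paper leaves implicit, and you handle them correctly.
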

	
	\begin{proof}
		Let $\F$ be a witness for $\av$.
		By the previous proposition it suffices to find a $g \in \cov^+(\F)$.
		Choose a disjoint partition $\omega = \bigcup_{k < \omega} A_k$ into infinite sets and a subset $\set{f_k}{k < \omega}$ from $\F$.
		Then, we define
		$$
			g(n) := f_k(n), \text{ where } n \in A_k.
		$$
		But then $g =^\infty f_k$ for every $k < \omega$, so by Proposition~\ref{PROP_Uncovered_real} we have $g \in \cov^+(\F)$.
	\end{proof}
	
	\begin{remark}
		As in Proposition~\ref{PROP_VanDouwen_Implies_Mad} for any eventually different family $\F$ we may define its trace
		$$
			\trace(\F) := \set{g \in \bairespace}{\A^\F_g \text{ is a (possibly finite) mad family}}.
		$$
		Proposition~\ref{PROP_VanDouwen_Implies_Mad} then implies that $\cov(\F) \subseteq \trace(\F)$ and Van Douwen families satisfy $\trace(\F) = \bairespace$.
		Conversely, $\trace(\F) = \bairespace$ also implies that $\F$ is Van Douwen, for if $A \in \infsubset{\omega}$ and $g: A \to \omega$, let $g^*$ be any extension of $g$ to $\omega$.
		By assumption $\A^\F_{g^*}$ is mad, so choose $f \in \F$ with $E^f_{g^*} \cap A$ infinite.
		But then $f \restr (E^f_{g^*} \cap A) =^\infty g \restr (E^f_{g^*} \cap A)$, i.e.\ $\F$ is Van Douwen.
		Note, that the trace is one of the crucial ingredients of the {\sf ZFC}-construction of a Van Douwen family in \cite{Raghavan_2010}.
	\end{remark}
	
	\section{$\EE_\F(I)$ adds a mad family of size $\max(\left|\F\right|, \left|I\right|)$}
	
	Remember the standard c.c.c.\ forcing $\EE_\F(I)$ for extending an eventually different family $\F$ by $I$-many new eventually different reals:
	\begin{definition}
		Let $\F$ be an e.d.\ family and $I$ an index set.
		Let $\EE_\F(I)$ be the partial order of pairs $(s, E)$, where $s:I \times \omega \partialto \omega$ is a finite partial function and $E \in \finsubset{\F}$.
		For $(s, E) \in \EE_\F(I)$ and $i \in I$ we define the finite partial function $s_i:\omega \partialto \omega$ by $s_i := \set{(n,m)}{(i,n,m) \in s}$ and set $\supp(s) := \set{i \in I}{s_i \neq \emptyset}$.
		For $(s, E), (t, F) \in \EE_\F(I)$ we define $(t, F) \extends (s, E)$ iff
		\begin{enumerate}
			\item $s \subseteq t$ and $E \subseteq F$,
			\item for all $i \neq j \in \supp(s)$ and $n \in \dom(t_i) \setminus \dom(s_i)$ we have $n \notin \dom(t_j)$ or $t_i(n) \neq t_j(n)$,
			\item for all $i \in \supp(s)$, $f \in E$ and $n \in \dom(t_i) \setminus \dom(s_i)$ we have $t_i(n) \neq f(n)$.
		\end{enumerate}
	\end{definition}

	For $\left|I\right| = 1$ Zhang \cite{Zhang_1999} proved that iterating this forcing of uncountable cofinality yields a Van Douwen family.
	Similar density arguments give the same result for the product version:
	
	\begin{lemma} \label{LEM_Force_VanDouwen}
		Let $\F$ be an eventually different family and $I$ an uncountable index set.
		Then
		$$
			\EE_\F(I) \forces \F \cup \dot{\F}_\gen \text{ is a Van Douwen family},
		$$
		where $\dot{\F}_\gen = \set{\dot{f}^i_\gen}{i \in I}$ is the family $I$-many eventually different reals added by $\EE_\F(I)$.
	\end{lemma}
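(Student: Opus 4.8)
The plan is to verify the two defining requirements of Van Douwen-ness separately: first that $\F \cup \dot{\F}_\gen$ is forced to be eventually different, and second that for every $A \in \infsubset{\omega}$ and every $g \in \functionspace{A}{\omega}$ in the extension some member of the family agrees with $g$ infinitely often on $A$, so that $(\F \cup \dot{\F}_\gen)\restr A$ is maximal eventually different. For the first point I would run the usual density argument that each $\dot{f}^i_\gen = \bigcup\set{s_i}{(s,E) \in G}$ is total: given $(s,E)$, an index $i$ and $n \notin \dom(s_i)$, one can choose $m$ avoiding the finitely many values forbidden by clauses (2) and (3), so that $(s \cup \simpleset{(i,n,m)}, E)$ is a legitimate extension. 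Eventual difference then falls directly out of the ordering: once two indices $i \neq j$ both lie in $\supp(s)$ for some condition in $G$, clause (2) forces $\dot{f}^i_\gen$ and $\dot{f}^j_\gen$ to disagree at every $n$ beyond the finite stem $\dom(s_i) \cup \dom(s_j)$, and clause (3) likewise pushes each $\dot{f}^i_\gen$ eventually off every $f \in \F$; the members of $\F$ are pairwise eventually different to begin with.

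For maximality I would fix names $\dot A, \dot g$ and a condition $p$ forcing $\dot A \in \infsubset{\omega}$ and $\dot g \in \functionspace{\dot A}{\omega}$. The crucial dichotomy is stated entirely in terms of $\F$: either some $f \in \F$ satisfies $f =^\infty \dot g$ on $\dot A$, in which case $E^f_{\dot g} \cap \dot A$ is infinite and $f$ itself is the required witness, or $\dot g$ is forced to be eventually different from every $f \in \F$ on $\dot A$. In the second case I would produce the witness from a single generic real $\dot{f}^{i^*}_\gen$. Since $\EE_\F(I)$ is c.c.c.\ (by the standard $\Delta$-system argument applied to the finite stems), and $I$ is uncountable, the names $\dot A, \dot g$ involve only countably many coordinates, so I may fix an index $i^*$ that is \emph{fresh}, i.e.\ not used by $\dot A$ or $\dot g$.

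The heart of the matter is then the density claim that, below $p$, it is dense to force $\dot{f}^{i^*}_\gen(n) = \dot g(n)$ for some $n \in \dot A$ above any prescribed bound $N$; genericity then delivers $\dot{f}^{i^*}_\gen =^\infty \dot g$ on $\dot A$. Given $q \leq p$ I would extend to some $q' = (s',E')$ deciding a value $n_0 \in \dot A$ with $n_0 > N$ together with $\dot g(n_0)$, and then adjoin $(i^*, n_0, \dot g(n_0))$ to the stem. What makes this succeed is that the two coherence clauses only ever impose finitely many constraints at the new point: clause (2) can obstruct it only when $n_0 \in \dom(s'_j)$ for some $j \in \supp(s')$, and $\dom(s')$ is finite; clause (3) can obstruct it only through the finitely many $f \in E' \subseteq \F$, and since $\dot g$ is eventually different from each such $f$ on $\dot A$ one may first extend to bound the finitely many possible agreements. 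Choosing the witness $n_0$ large enough, outside $\dom(s')$ and past all those agreements, both clauses hold, the extension is legitimate, and it forces the desired coincidence.

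The main obstacle is the coupling built into clause (2): because distinct coordinates constrain one another, the restriction of $\EE_\F(I)$ to a coordinate block is \emph{not} a complete suborder — amalgamating an extension on the block with the frozen columns outside it can create exactly a clause-(2) collision — so one cannot simply factor the forcing and invoke single-step genericity as in Zhang's original case. The real work therefore lies in carrying out the density argument directly inside $\EE_\F(I)$, exploiting the freshness of $i^*$ to keep column $i^*$ available at the target point and the finiteness of each stem to defeat clauses (2) and (3) at once; this is precisely the ``similar density argument'' alluded to in the statement. To finish, I would observe that eventual difference makes the restrictions $h \restr A$ pairwise distinct, so that maximality of $(\F \cup \dot{\F}_\gen)\restr A$ upgrades to its being m.e.d., which is exactly the Van Douwen property.
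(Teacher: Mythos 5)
Your first half (totality and pairwise eventual difference of $\F \cup \dot{\F}_\gen$) is fine, and the skeleton of the maximality argument --- the dichotomy, the fresh coordinate $i^*$, and the reduction to a density claim --- is the right shape; the paper itself only gestures at ``similar density arguments,'' so the whole burden lies on exactly the step you describe. But that step, as written, has a genuine circularity. The witness pair $(n_0,m_0)$ can only be produced by \emph{extending} $q$ to a condition $q'=(s',E')$ deciding ``$n_0\in\dot A$'' and ``$\dot g(n_0)=m_0$,'' while the legitimacy of adjoining $(i^*,n_0,m_0)$ is tested against $q'$ itself: when $i^*\in\supp(s')$ (which you cannot prevent, since $q$ is an arbitrary condition below $p$ and may already have $i^*$ in its support), clause (2) demands $m_0\neq s'_j(n_0)$ for every $j\in\supp(s')$ with $n_0\in\dom(s'_j)$, and clause (3) demands $m_0\neq f(n_0)$ for every $f\in E'$. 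Your prescriptions --- ``first extend to bound the finitely many agreements'' and ``choose $n_0$ outside $\dom(s')$ and past those agreements'' --- cannot be executed in any order: bounding the agreements of $\dot g$ with the current side set enlarges the side set, and deciding $n_0$ and $\dot g(n_0)$ enlarges both the stem and the side set, so the finite sets to be avoided are moving targets. Nothing in your argument excludes that \emph{every} condition deciding the witness also blocks it, say by containing a freshly added promise $f\in E'$ with $f(n_0)=m_0$; the hypothesis that $\dot g$ is forced eventually different from each $f\in\F$ on $\dot A$ bounds the agreements of each \emph{fixed} $f$ only, whereas the blocking $f$ varies with the deciding condition.

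Moreover, the structural claim you invoke to justify working one-pass inside $\EE_\F(I)$ --- that the restriction to a coordinate block $J$ is not a complete suborder --- is not correct; your observation only shows that the \emph{naive} restriction $(s,E)\mapsto(s\restr(J\times\omega),E)$ fails to be a reduction. Reductions do exist: given $(s,E)$, pre-fill each column $i\in\supp(s)\cap J$ at the finitely many positions occupied by the columns outside $J$, with values avoiding those columns' values, each other, and the values of members of $E$; any $\EE_\F(J)$-extension of the resulting condition then amalgamates with the frozen outside columns with no clause-(2) or clause-(3) collision, so it is a reduction and $\EE_\F(J)\completesubposet\EE_\F(I)$. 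This embedding is precisely what repairs the proof: take $J$ countable supporting nice names for $\dot A,\dot g$ (and the condition forcing $\dot g$ to be eventually different from everything), pass to $V[G_J]$, where $A$ and $g$ are actual objects and $g$ is --- toward contradiction --- eventually different on $A$ from every $f\in\F$ \emph{and} from every column $f^j_\gen$ with $j\in J$. In the quotient, adjoining $(i^*,n,g(n))$ to a condition $(t,F)$ only has to avoid, at $n$, the values of the finitely many $J$-generics $f^j_\gen$ with $j\in\supp(t)\cap J$, the finitely many promises $f\in F$, and the finitely many frozen outside columns; each excludes only finitely many $n\in A$ because $g$ is now a parameter, so Zhang's one-step density argument runs verbatim. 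Your version, which tries to decide the name and place the point in the same pass, does not close, and the place where it fails is exactly where the (correct) factorization has to be used.
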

	
	Note, that in contrast to the iteration-version Zhang considered in \cite{Zhang_1999}, with the product-version it is possible to add Van Douwen families of uncountable size with countable cofinality.
	We use Lemma~\ref{LEM_Force_VanDouwen} and a similar argument as in the proof of $\a \leq \av$ (Corollary~\ref{COR_A_LEQ_AV}) to prove that the standard forcing to realize a desired spectrum of $\aE$ also forces $\a$ to have the same spectrum.
	
	\begin{lemma}\label{LEM_MED_Forcing_Adds_Mad_Family}
		Let $\F$ be an eventually different family and $I$ an uncountable index set.
		Then
		$$
		\EE_\F(I) \forces \max(\left|\F\right|, \left|I\right|) \in \spec(\a).
		$$
	\end{lemma}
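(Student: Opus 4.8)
The plan is to apply Lemma~\ref{LEM_Force_VanDouwen} to reduce to a Van Douwen family and then extract a maximal almost disjoint family of the correct size on the countable set $\omega \times \omega$, in the same spirit as the proof of Corollary~\ref{COR_A_LEQ_AV} but keeping careful track of cardinalities. Write $\kappa := \max(\left|\F\right|, \left|I\right|)$. Since $\EE_\F(I)$ is c.c.c.\ it preserves cardinals, and as the generic reals $\dot{f}^i_\gen$ ($i \in I$) are pairwise eventually different (hence pairwise distinct) and eventually different from every member of $\F$, in $V[G]$ the family $\G := \F \cup \dot{\F}_\gen$ has size exactly $\kappa$. By Lemma~\ref{LEM_Force_VanDouwen}, $\G$ is Van Douwen. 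It therefore suffices to build, in $V[G]$, a mad family of size $\kappa$; since $\omega \times \omega$ is countable we may build it there and transport it to $\omega$ via a bijection.

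For each $f \in \G$ let $\Gamma_f := \set{(n, f(n))}{n < \omega} \subseteq \omega \times \omega$ be its graph, and for each $n < \omega$ let $V_n := \simpleset{n} \times \omega$ be the $n$-th vertical column. Set $\D := \set{\Gamma_f}{f \in \G} \cup \set{V_n}{n < \omega}$. First I would check that $\D$ is an almost disjoint family: the columns $V_n$ are pairwise disjoint; for $f \neq f' \in \G$ we have $\Gamma_f \cap \Gamma_{f'} = \set{(n, f(n))}{f(n) = f'(n)}$, which is finite because $f$ and $f'$ are eventually different; and $\Gamma_f \cap V_n = \simpleset{(n, f(n))}$ is a single point. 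Moreover $f \mapsto \Gamma_f$ is injective, so $\left|\D\right| = \left|\G\right| + \aleph_0 = \kappa$.

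The heart of the argument is maximality, and this is where the Van Douwen property is used. Let $X \in \infsubset{\omega \times \omega}$ be arbitrary. If $X \cap V_n$ is infinite for some $n$ we are done, so assume every vertical fibre $X \cap V_n$ is finite. Then the projection $A := \set{n < \omega}{X \cap V_n \neq \emptyset}$ is infinite, and choosing one point $(n, m_n) \in X$ for each $n \in A$ defines an infinite partial function $h : A \to \omega$ with $\set{(n, h(n))}{n \in A} \subseteq X$. Since $\G$ is Van Douwen, $\G \restr A$ is a m.e.d.\ family in $\functionspace{A}{\omega}$, so there is $f \in \G$ with $f \restr A =^\infty h$ (either $h \in \G \restr A$, or maximality gives some $f \restr A$ agreeing with $h$ infinitely often). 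For each such point of agreement we have $(n, f(n)) = (n, h(n)) \in \Gamma_f \cap X$, so $\Gamma_f \cap X$ is infinite. Hence $\D$ is maximal.

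Combining the two steps, $\D$ is a mad family of size $\kappa$ in $V[G]$, so $\kappa \in \spec(\a)$, as required. The main obstacle is the maximality step: one must observe that the only infinite subsets of $\omega \times \omega$ witnessing non-maximality of $\set{\Gamma_f}{f \in \G}$ on its own are the (almost) vertical ones, which is exactly why the columns $V_n$ must be thrown in; once an infinite partial function has been located inside $X$, the maximality of $\G \restr A$ -- that is, precisely the Van Douwen property supplied by Lemma~\ref{LEM_Force_VanDouwen} -- finishes the job. The cardinality bookkeeping (that $\left|\G\right| = \kappa$ and that adding countably many columns does not change this) is routine given that the forcing is c.c.c.
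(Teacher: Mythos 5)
Your proof is correct, and after the shared first step (invoking Lemma~\ref{LEM_Force_VanDouwen}) it takes a genuinely different route from the paper's. The paper stays inside the forcing: it glues countably many generics $\dot{f}^{i_k}_\gen$ along a partition $\omega = \bigcup_{k<\omega} A_k$ into a single real $\dot{g}$, then runs two further density arguments to show that $\dot{g}$ agrees infinitely often with every $f \in \F$ and with every generic whose index lies outside the chosen countable set $I_0$, and finally applies Proposition~\ref{PROP_VanDouwen_Implies_Mad} so that the agreement sets $E^f_{\dot g}$ form a mad family, whose size is $\max(\left|\F\right|,\left|I\right|)$ precisely because of those density arguments (this mirrors the gluing argument of Corollary~\ref{COR_A_LEQ_AV}). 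You instead prove a purely combinatorial {\sf ZFC} fact in $V[G]$: any Van Douwen family $\G$ of uncountable size $\kappa$ yields a mad family of size $\kappa$ on $\omega \times \omega$, namely the graphs $\Gamma_f$ together with the vertical columns $V_n$, where maximality reduces to maximality of $\G \restr A$ for the projection set $A$ and the size bookkeeping is automatic from injectivity of $f \mapsto \Gamma_f$. Your route is more modular: it in effect establishes $\spec(\av) \subseteq \spec(\a)$ outright, so the lemma follows for \emph{any} forcing adding a Van Douwen family of the right size, and the in-forcing density arguments disappear entirely. The paper's route produces the mad family in the trace form of Proposition~\ref{PROP_VanDouwen_Implies_Mad}, living directly on $\omega$ and tying in with the paper's recurring use of agreement sets, at the cost of the extra genericity arguments. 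One small point you assert without proof --- that the $\dot{f}^i_\gen$ are pairwise eventually different and eventually different from $\F$, so that $\left|\G\right| = \kappa$ --- is a standard density fact about $\EE_\F(I)$ which the paper also takes for granted (and implicitly needs for its own cardinality claim), so it is not a gap.
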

	
	\begin{proof}
		Choose a disjoint partition $\omega = \bigcup_{i < \omega} A_i$ into infinite sets and a subset $I_0 = \set{i_k}{k < \omega}$ from $I$.
		Let $G$ be $\EE_\F(I)$-generic.
		In $V[G]$ we define
		$$
		g(n) := f^{i_k}_{\gen}(n), \text{ where } n \in A_k.
		$$
		By construction, we have $E_g^{f^{i_k}_\gen} \in \infsubset{\omega}$ for all $k < \omega$.
		We show that also $E_g^f \in \infsubset{\omega}$ for all $f \in \F$.
		So in $V$ fix $f \in \F$ and let $N < \omega$ and $(s, E) \in \EE_\F(I)$.
		Choose $k < \omega$ such that $i_k \notin \supp(s)$ and $n \geq N$ with $n \in A_k$.
		Then $(s \cup \simpleset{(i_k, n, f(n))}, E) \in \EE_\F(I)$, $(s \cup \simpleset{(i_k, n, f(n))}, E) \extends (s,E)$ and
		$$
		(s \cup \simpleset{(i_k, n, f(n))}, E) \forces \dot{g}(n) = \dot{f}^{i_k}_\gen(n) = f(n).
		$$
		Finally, we show that $E_g^{f^{i}_\gen} \in \infsubset{\omega}$ for all $i \in I \setminus I_0$.
		So in $V$ fix $i \in I \setminus I_0$ and let $N < \omega$ and $(s, E) \in \EE_\F(I)$.
		We may assume that $i \in \dom(s)$.
		Choose $k < \omega$ such that $i_k \notin \supp(s)$ and $n \geq N$ with $n \in A_k$ and $n \notin \dom(s_j)$ for all $j  \in \dom(s)$.
		Finally, choose $m \in \omega \setminus \set{f(n)}{f \in E}$.
		Then we have $(s \cup \simpleset{(i_k, n, m), (i, n, m)}, E) \in \EE_\F(I)$, $(s \cup \simpleset{(i_k, n, m), (i, n, m)}, E) \extends (s,E)$ and
		$$
			(s \cup \simpleset{(i_k, n, m), (i, n, m)}, E) \forces \dot{g}(n) = \dot{f}^{i_k}_\gen(n) = m = \dot{f}^{i}_\gen(n).
		$$
		Hence, by Lemma~\ref{LEM_Force_VanDouwen} and Proposition~\ref{PROP_VanDouwen_Implies_Mad} we obtain
		$$
			\EE_\F(I) \forces \A_{\dot{g}}^{\F \cup \dot{\F}_\gen} \text{ is a mad family of size } \max(\left|\F\right|, \left|I\right|),
		$$
		completing the proof.
	\end{proof}

	\section{Spectrum of Van Douwen families}

	In this section, similar to $\a$ \cite{Hechler_1972} and $\aT$ \cite{Brian_2021} we prove that the spectrum of Van Douwen families is closed under singular limits.
	The main idea is that we may glue a sequence of Van Douwen families together in order to obtain a bigger Van Douwen family.
	A similar argument fails for maximal eventually different families as the gluing argument we present here might not preserve maximality.
	Hence, the corresponding Question~\ref{QUE_Closure} for $\aE$ is still open.

	\begin{theorem} \label{THM_Closure}
		$\spec(\av)$ is closed under singular limits.
	\end{theorem}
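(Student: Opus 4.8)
The plan is to realize $\kappa$ as the size of a Van Douwen family built as a kind of ``product'' of a base Van Douwen family with a family of Van Douwen fibres, exploiting the freedom (Remark~\ref{REM_MED_Other_Domains}) to take the codomain to be $\omega\times\omega$. Write $\mu := \operatorname{cf}(\kappa) < \kappa$. Since $\spec(\av)\cap\kappa$ is cofinal in $\kappa$ and $\mu<\kappa$, I fix a strictly increasing sequence $\langle\lambda_\alpha : \alpha<\mu\rangle$ of elements of $\spec(\av)$, cofinal in $\kappa$, with $\lambda_0 > \mu$, and pick Van Douwen families $\F_\alpha \subseteq \bairespace$ with $\left|\F_\alpha\right| = \lambda_\alpha$. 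I also fix a Van Douwen family $\G = \{h_\eta : \eta < \theta\}$ with $\theta := \lambda_0$ for the first coordinate, and one auxiliary Van Douwen family $\F^*$ of size $\lambda_0$. Since $\theta = \lambda_0 > \mu$, I may choose distinct indices $\eta_\alpha < \theta$ ($\alpha<\mu$) and define a Van Douwen fibre $S_\eta$ for \emph{every} $\eta<\theta$ by $S_{\eta_\alpha} := \F_\alpha$ and $S_\eta := \F^*$ for the remaining $\eta$.

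Now set
$$\F := \{\, (h_\eta, f) \mid \eta<\theta,\ f\in S_\eta \,\} \subseteq \functionspace{\omega}{(\omega\times\omega)},$$
where $(h_\eta,f)$ denotes the function $n\mapsto(h_\eta(n),f(n))$. The eventual-difference check splits into two cases which I would verify directly: two members with different first index $\eta\neq\eta'$ are eventually different because $h_\eta,h_{\eta'}$ already are (so they agree only finitely often, regardless of the second coordinates), while two members $(h_\eta,f),(h_\eta,f')$ sharing the same $\eta$ are eventually different because $f\neq f'$ lie in the eventually different family $S_\eta$. As the labelling $(\eta,f)\mapsto(h_\eta,f)$ is injective, this also yields $\left|\F\right| = \sum_{\eta<\theta}\left|S_\eta\right| = \sum_{\alpha<\mu}\lambda_\alpha + \theta\cdot\lambda_0 = \kappa$, using $\lambda_0<\kappa$ and $\sup_\alpha\lambda_\alpha=\kappa$.

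The heart of the proof is that $\F$ is Van Douwen. Given an infinite $A$ and a partial function $g = (g_0,g_1): A \to \omega\times\omega$, I would first apply that $\G$ is Van Douwen to the first-coordinate partial function $g_0$: this produces $\eta<\theta$ and an infinite $B\subseteq A$ with $h_\eta\restr B = g_0\restr B$. Crucially $B$ is again infinite, so I may now apply that the fibre $S_\eta$ is Van Douwen to the partial function $g_1\restr B$, obtaining $f\in S_\eta$ and an infinite $B'\subseteq B$ with $f\restr B' = g_1\restr B'$. On $B'$ both coordinates agree, so $(h_\eta,f)\restr B' = g\restr B'$; hence $(h_\eta,f)\restr A$ is not eventually different from $g$. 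As $g$ and $A$ were arbitrary, each $\F\restr A$ is maximal, i.e.\ $\F$ is Van Douwen. Transferring along a bijection $\omega\times\omega\cong\omega$ (Remark~\ref{REM_MED_Other_Domains}) gives a Van Douwen family on $\bairespace$ of size $\kappa$, so $\kappa\in\spec(\av)$.

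I expect the main conceptual obstacle to be exactly the point where Van Douwen-ness (as opposed to mere m.e.d.-ness) is indispensable: the set $B$ on which the first coordinate is captured is in general a proper infinite subset of $A$, so the fibre $S_\eta$ must capture the partial function $g_1\restr B$ on the smaller domain $B$, and only Van Douwen families are guaranteed to do this. This is precisely why the same gluing cannot be carried out with m.e.d.\ families, which capture total functions only. A secondary point requiring care is that \emph{every} $h_\eta\in\G$ must occur as a first coordinate with a nonempty Van Douwen fibre, since a proper subfamily of $\G$ need not be Van Douwen; this is the reason for the filler fibres $\F^*$ rather than using only the $\mu$ indices $\eta_\alpha$.
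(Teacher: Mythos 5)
Your proposal is correct and is essentially the paper's own argument: the same product construction into $\omega\times\omega$, with a base Van Douwen family of size $\lambda_0$ on the first coordinate, the families $\F_\alpha$ as fibres over designated base elements, a size-$\lambda_0$ Van Douwen filler fibre over all remaining base elements, and the identical two-step capture argument for Van Douwen-ness. The only (cosmetic) difference is that you use separate families $\G$ and $\F^*$ for the base and the filler, whereas the paper reuses $\F_0$ for both roles.
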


	\begin{proof}
		Let $\kappa = \cof(\lambda) < \lambda$, $\seq{\lambda_\alpha}{\alpha < \kappa}$ be an increasing sequence of cardinals cofinal in $\lambda$ with $\kappa < \lambda_0$ and $\seq{\F_\alpha}{\alpha < \kappa}$ a sequence of Van Douwen families with $\left|\F_\alpha\right| = \lambda_\alpha$.
		Choose pairwise different elements $\G = \seq{g_\alpha \in \F_0}{\alpha < \kappa}$.
		We construct an eventually different family of functions from $\omega \to \omega \times \omega$ of size $\lambda$ as follows:
		\begin{enumerate}[$\bullet$]
			\item Given $\alpha < \kappa$ and $f \in \F_\alpha$ define $(g_\alpha \times f):\omega \to (\omega \times \omega)$ for $k < \omega$ by
			$$
				(g_\alpha \times f)(k) := (g_\alpha(k), f(k)).
			$$
			\item Given $f_0 \in \F_0 \setminus \G$ and $f_1 \in \F_0$ define $(f_0 \times f_1):\omega \to (\omega \times \omega)$ for $k < \omega$ by 
			$$
				(f_0 \times f_1)(k) := (f_0(k), f_1(k)).
			$$
		\end{enumerate}
		Finally, we define the family $\F$ to be family of all functions from $\omega \to (\omega \times \omega)$ of one of the two forms above.
		Then $\F$ is of size $\lambda$ and we claim that $\F$ is Van Douwen.
		First, we prove that $\F$ is e.d., so we have to consider the following cases:
		\begin{enumerate}[$\bullet$]
			\item Let $\alpha < \kappa$ and $f \neq f' \in \F_\alpha$.
			Since $f$ and $f'$ are e.d.\ choose $K < \omega$ such that $f(k) \neq f'(k)$ for all $k \geq K$.
			But then for every $k \geq K$ we have
			$$
				(g_\alpha \times f)(k) = (g_\alpha(k), f(k)) \neq (g_\alpha(k), f'(k)) = (g_\alpha \times f')(k).
			$$
			\item Let $\alpha \neq \beta < \kappa$ and $f \in \F_\alpha, f' \in \F_\beta$.
			Since $g_\alpha$ and $g_\beta$ are e.d.\ choose $K < \omega$ such that $g_\alpha(k) \neq g_\beta(k)$ for all $k \geq K$.
			But then for every $k \geq K$ we have
			$$
				(g_\alpha \times f)(k) = (g_\alpha(k), f(k)) \neq (g_\beta(k), f'(k)) = (g_\beta \times f')(k).
			$$
			\item Let $\alpha < \kappa$, $f \in \F_\alpha$, $f_0 \in \F_0 \setminus \G$ and $f_1 \in \F_0$.
			Since $g_\alpha$ and $f_0$ are e.d.\ choose $K < \omega$ such that $g_\alpha(k) \neq f_0(k)$ for all $k \geq K$.
			But then for every $k \geq K$ we have
			$$
				(g_\alpha \times f)(k) = (g_\alpha(k), f(k)) \neq (f_0(k), f_1(k)) = (f_0 \times f_1)(k).
			$$
			\item Let $f_0, f_0' \in \F_0 \setminus \G$ and $f_1, f_1' \in \F_0$ with $(f_0', f_1') \neq (f_0, f_1)$.
			W.l.o.g.\ assume $f_0 \neq f_0'$.
			Then $f_0$ and $f_0'$ are e.d., so choose $K < \omega$ such that $f_0(k) \neq f'_0(k)$ for all $k \geq K$.
			But then for every $k \geq K$ we have
			$$
				(f_0 \times f_1)(k) = (f_0(k), f_1(k)) \neq (f'_0(k), f'_1(k)) = (f'_0 \times f'_1)(k).
			$$
		\end{enumerate}
		Hence, it remains to prove that $\F$ is Van Douwen.
		So let $A \in \infsubset{\omega}$ and $h:A \to (\omega \times \omega)$.
		For $i \in 2$ let $p_i(h):A \to \omega$ be the projection of $h$ to the $i$-th component.
		As $\F_0$ is Van Douwen choose $B \in \infsubset{A}$ and $f_0 \in \F_0$ such that $f_0 \restr B = p_0(h) \restr B$.
		We consider the following two cases:
		
		$f_0 \in \G$.
		Choose $\alpha < \kappa$ such that $f_0 = g_\alpha$.
		As $\F_\alpha$ is Van Douwen choose $C \in \infsubset{B}$ and $f \in \F_\alpha$ such that $p_1(h) \restr C = f \restr C$.
		But then for every $k \in C$ we have
		$$
			(g_\alpha \times f)(k) = (g_\alpha(k), f(k)) = (p_0(h)(k), p_1(h)(k)) = h(k).
		$$
		
		Otherwise, $f_0 \in \F_0 \setminus \G$.
		As $\F_0$ is Van Douwen choose $C \in \infsubset{B}$ and $f_1 \in \F_0$ with $p_1(h) \restr C = f_1$.
		But then for every $k \in C$ we have
		$$
			(f_0 \times f_1)(k) = (f_0(k), f_1(k)) = (p_0(h)(k), p_1(h)(k)) = h(k).
		$$
		Hence, in both cases $h$ is infinitely often equal to some element in $\F\restr A$.
	\end{proof}

	\section{Many non Van Douwen families}

	Given a maximal eventually different family $\F$ Raghavan in \cite{Raghavan_2010} introduced the following ideal $\I_0(\F)$ measuring how far $\F$ is from being Van Douwen.

	\begin{definition}\label{DEF_Associated_Ideal}
		Let $\F$ be an eventually different family.
		Then we define
		$$
			\I_0(\F) := \set{A \in \infsubset{\omega}}{\F \restr A \text{ is not a m.e.d. family}} \cup \Fin.
		$$
	\end{definition}

	Note, that $\I_0(\F)$ is proper iff $\F$ is a maximal eventually different family.
	We also verify that it is indeed an ideal:

	\begin{proposition}\label{PROP_Associated_Ideal}
		$\I_0(\F)$ is an ideal.
	\end{proposition}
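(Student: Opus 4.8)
The plan is to verify directly the two closure properties that define an ideal on $\omega$ -- closure under subsets and under finite unions -- together with the trivial observation that $\Fin \subseteq \I_0(\F)$ holds by definition. The key preliminary step is to reformulate membership in $\I_0(\F)$ in terms of a \emph{witnessing function}. For $A \in \infsubset{\omega}$ first note that $\F \restr A$ is automatically eventually different: if $f \neq f' \in \F$ then $\set{n < \omega}{f(n) = f'(n)}$ is finite, hence so is $\set{n \in A}{f(n) = f'(n)}$, which also shows $f \restr A \neq f' \restr A$. Therefore $\F \restr A$ fails to be m.e.d.\ precisely when it fails to be \emph{maximal}, i.e.\ precisely when there is some $h : A \to \omega$ that is eventually different from $f \restr A$ for every $f \in \F$. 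I will call such an $h$ a witness for $A$; so for infinite $A$ we have $A \in \I_0(\F)$ iff a witness for $A$ exists, and the whole argument then reduces to manipulating witnesses, using only that $\set{n \in A}{h(n) = f(n)}$ is finite for every $f \in \F$.

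For closure under subsets, let $A \in \I_0(\F)$ and $B \subseteq A$. If $B$ is finite then $B \in \Fin \subseteq \I_0(\F)$, so assume $B$ is infinite and fix a witness $h$ for $A$. I claim $h \restr B$ is a witness for $B$: for every $f \in \F$ we have $\set{n \in B}{h(n) = f(n)} \subseteq \set{n \in A}{h(n) = f(n)}$, and the latter is finite. Hence $B \in \I_0(\F)$.

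For closure under finite unions, let $A, B \in \I_0(\F)$; I must show $A \cup B \in \I_0(\F)$. If both are finite their union lies in $\Fin$. If exactly one, say $B$, is finite, fix a witness $h$ for the infinite set $A$ and extend it to $A \cup B$ by assigning arbitrary values on the finite set $B \setminus A$; since only finitely many values are added, $\set{n \in A \cup B}{h(n) = f(n)}$ differs from the finite set $\set{n \in A}{h(n) = f(n)}$ by only finitely much, so it remains finite for every $f \in \F$. Finally, if both $A$ and $B$ are infinite, fix witnesses $h_A$ for $A$ and $h_B$ for $B$ and define $h : A \cup B \to \omega$ by $h \restr A = h_A$ and $h(n) = h_B(n)$ for $n \in B \setminus A$. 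Then for each $f \in \F$,
$$
\set{n \in A \cup B}{h(n) = f(n)} \subseteq \set{n \in A}{h_A(n) = f(n)} \cup \set{n \in B}{h_B(n) = f(n)},
$$
which is a union of two finite sets and hence finite. Thus $h$ is a witness for $A \cup B$, so $A \cup B \in \I_0(\F)$.

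There is no serious obstacle here: the only point requiring any care is the gluing in the finite-union case, and it works precisely because the witnessing condition is a statement of \emph{finiteness} of agreement, which is preserved both under passing to subsets and under taking finite unions. The reformulation in the first paragraph -- that non-maximality of $\F \restr A$ is equivalent to the existence of a single eventually different witness on $A$ -- is the one step that genuinely uses the structure of eventually different families, and once it is in place the ideal axioms follow immediately.
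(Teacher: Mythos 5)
Your proof is correct and follows essentially the same route as the paper's: both arguments reduce membership in $\I_0(\F)$ to the existence of a witnessing function eventually different from $\F$ on the given set, restrict that witness for closure under subsets, and glue two witnesses (one on $A$, the other on $B \setminus A$) for closure under finite unions. The one point the paper's proof records that you omit is properness, $\omega \notin \I_0(\F)$, which follows immediately from the maximality of $\F$; since ideals in this paper are assumed to be proper, that one line should be added.
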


	\begin{proof}
		Since $\F$ is maximal we have $\omega \notin \I_0(\F)$.
		If $A \in \I_0(\F)$ and $B \in \infsubset{A}$ choose $g:A \to \omega$ eventually different from $\F \restr A$.
		But then $g \restr B: B \to \omega$ is eventually different from $\F \restr B$.
		Hence, $B \in \I_0(\F)$.
		
		Finally, let $A, B \in \I_0(\F)$.
		We may assume that $A \in \infsubset{\omega}$, so choose $g: A \to \omega$ eventually different from $\F \restr A$.
		If $B$ is finite, then any extension of $f:A \to \omega$ to $f^*:(A \cup B) \to \omega$ is eventually different from $\F \restr (A \cup B)$, so assume that $B \in \infsubset{\omega}$ and choose $h:B \to \omega$ eventually different from $\F \restr B$.
		We claim that $g \cup h \restr (A \setminus B):(A \cup B) \to \omega$ is eventually different from $\F \restr (A \cup B)$, so let $f \in \F$.
		By choice of $g$ and $h$ there are $K_0, K_1 < \omega$ such that $g(k) \neq f(k)$ for every $k \in A \setminus K_0$ and $h(k) \neq f(k)$ for every $k \in B \setminus K_1$.
		Hence, $(g \cup h \restr (B \setminus A))(k) \neq f(k)$ for all $k \in (A \cup B) \setminus (K_0 \cup K_1)$.
	\end{proof}

	\begin{corollary}\label{COR_VanDouwen}
		$\F$ is Van Douwen iff $\I_0(\F) = \Fin$.
	\end{corollary}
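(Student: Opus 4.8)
The statement is a direct unwinding of Definitions~\ref{DEF_VanDouwen} and~\ref{DEF_Associated_Ideal}, so the plan is to verify the two implications by simply tracing through what each side asserts, keeping track of the fact that the instance $A = \omega$ secretly encodes the maximality of $\F$ on both sides. No genuine combinatorics should be needed here; the work is purely in reading off the definitions correctly.

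For the forward direction I would assume that $\F$ is Van Douwen. By Definition~\ref{DEF_VanDouwen} this means $\F \restr A$ is a m.e.d.\ family for every $A \in \infsubset{\omega}$. Hence the defining set $\set{A \in \infsubset{\omega}}{\F \restr A \text{ is not a m.e.d.\ family}}$ is empty, and so $\I_0(\F) = \emptyset \cup \Fin = \Fin$, as desired.

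For the converse I would argue contrapositively: assuming $\F$ is not Van Douwen, I would produce an explicit witness that $\I_0(\F) \neq \Fin$. Negating Definition~\ref{DEF_VanDouwen} yields some $A \in \infsubset{\omega}$ for which $\F \restr A$ fails to be m.e.d. This $A$ then lies in the first set in the definition of $\I_0(\F)$, so $A \in \I_0(\F)$; since $A$ is infinite we have $A \notin \Fin$, and therefore $\I_0(\F) \neq \Fin$.

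The only point requiring a moment of care — and the closest thing to an obstacle in an otherwise formal argument — is making sure neither condition is vacuously compatible with $\F$ being non-maximal. This is handled by the instance $A = \omega$: applying the Van Douwen property at $A = \omega$ gives that $\F \restr \omega = \F$ is itself m.e.d., while the hypothesis $\I_0(\F) = \Fin$ forces $\omega \notin \I_0(\F)$, which again says exactly that $\F$ is m.e.d. Thus both sides of the equivalence already entail maximality of $\F$, so there is no hidden gap, and the biconditional follows immediately from the definitions.
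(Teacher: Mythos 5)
Your proof is correct and matches the paper's intent exactly: the paper states this as an immediate corollary (with no written proof) precisely because both sides of the equivalence invoke the same predicate ``$\F \restr A$ is a m.e.d.\ family'' from Definitions~\ref{DEF_VanDouwen} and~\ref{DEF_Associated_Ideal}, which is the unwinding you carry out. Your observation that the case $A = \omega$ makes maximality of $\F$ automatic on both sides is a correct and worthwhile sanity check, though not strictly needed for the biconditional.
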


	We prove that under {\sf CH} any ideal may be realized as the associated $\I_0$-ideal of some maximal eventually different family.
	To achieve this, we need the following two diagonalization lemmata:

	\begin{lemma}\label{LEM_Extend_Witnesses}
		Let $\F = \seq{f_n}{n < \omega}$ be e.d.\ and $A \in \infsubset{\omega}$.
		Then there is $g: A \to \omega$ such that $\F \restr A \cup \simpleset{g}$ is e.d.
	\end{lemma}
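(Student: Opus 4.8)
The plan is to build $g$ by a routine diagonalization along $A$, arranging that at later and later points of $A$ we dodge the values of more and more of the $f_n$. First I would fix the increasing enumeration $A = \simpleset{a_k : k < \omega}$. At stage $k$ I define
$$
	g(a_k) \in \omega \setminus \set{f_j(a_k)}{j \leq k},
$$
which is possible because we are removing only finitely many values from the infinite set $\omega$. This defines $g : A \to \omega$.

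Next I would check that $g$ is eventually different from each $f_n \restr A$. Fix $n < \omega$. For every $k \geq n$ the value $g(a_k)$ was chosen to differ from $f_n(a_k)$, so
$$
	\set{a \in A}{g(a) = f_n(a)} \subseteq \simpleset{a_0, \dots, a_{n-1}},
$$
which is finite. Hence $g$ and $f_n \restr A$ agree only finitely often, for every $n$.

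Finally I would note that $\F \restr A$ is itself e.d.: for $m \neq n$ the agreement set of $f_m \restr A$ and $f_n \restr A$ is a subset of the (finite) agreement set of $f_m$ and $f_n$, using that $\F$ is e.d. Combining this with the previous paragraph shows that every pair from $\F \restr A \cup \simpleset{g}$ is eventually different, so $\F \restr A \cup \simpleset{g}$ is e.d., as required.

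There is essentially no serious obstacle here; the only point requiring care is the bookkeeping of the diagonalization. The key idea is to let the set of forbidden values grow with $k$ (avoiding $f_0, \dots, f_k$ at $a_k$) rather than trying to avoid all of $\F$ at once: avoiding all infinitely many $f_n$ simultaneously at a single point would be impossible, but since eventual difference only requires each $f_n$ to be avoided from some point onward, the staggered choice above suffices.
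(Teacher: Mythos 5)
Your proof is correct and is essentially the paper's own argument: enumerate $A$ and diagonalize, choosing $g(a_k)$ to avoid the finitely many values $f_j(a_k)$ for $j \leq k$, so that $g$ disagrees with each $f_n$ from $a_n$ onward. The extra verifications you include (finiteness of each agreement set, and that $\F \restr A$ is itself e.d.) are exactly what the paper compresses into ``by construction.''
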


	\begin{proof}
		Enumerate $A$ by $\set{a_n}{n < \omega}$.
		Inductively, choose $g(a_n)$ different from $\set{f_m(a_n)}{m < n}$.
		By construction $\F \restr A \cup \simpleset{g}$ is e.d.
	\end{proof}

	\begin{lemma} \label{LEM_Diagonalize_With_Witnesses}
		Let $\I$ be an ideal, $\F = \seq{f_n}{n < \omega}$ be e.d.\ and $\seq{g_n:A_n \to \omega}{n < \omega}$ be such that $A_n \in \I$ and $\F \restr A_n \cup \simpleset{g_n}$ is e.d.\ for all $n < \omega$.
		Further, let $h:B \to \omega$ such that $B \notin \I$ and $\F \restr B \cup \simpleset{h}$ is e.d.
		Then there is $f: \omega \to \omega$ such that
		\begin{enumerate}
			\item $\F \cup \simpleset{f}$ is e.d.,
			\item $(\F \cup \simpleset{f}) \restr A_n \cup \simpleset{g_n}$ is e.d.\ for all $n < \omega$,
			\item $f \restr C = h \restr C$ for some $C \in \infsubset{B}$.
		\end{enumerate}
	\end{lemma}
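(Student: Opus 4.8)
The plan is to construct $f$ by recursion on $\omega$, interleaving the requirements, and I would first observe that condition (2) can be reduced. Since I will arrange that $\F \cup \simpleset{f}$ is eventually different (condition (1)), the restricted family $(\F \cup \simpleset{f}) \restr A_n$ is automatically eventually different, and $\F \restr A_n \cup \simpleset{g_n}$ is eventually different by hypothesis; hence (2) holds as soon as $f \restr A_n$ is eventually different from $g_n$ for every $n$. So the three targets become: (a) $f$ eventually different from each $f_m$, (b) $f \restr A_n$ eventually different from each $g_n$, and (c) $f \restr C = h \restr C$ for some $C \in \infsubset{B}$.

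The crucial structural fact I would exploit is the asymmetry $A_n \in \I$ versus $B \notin \I$. As $\I$ is an ideal containing $\Fin$, each finite union $A_0 \cup \cdots \cup A_j$ lies in $\I$, so $B \setminus (A_0 \cup \cdots \cup A_j) \notin \I$ and is in particular infinite; this is exactly the set from which I would draw the points where $f$ is forced to copy $h$.

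Concretely, I would build $f$ on longer and longer initial segments. Having defined $f$ on $[0, N_j)$ at stage $j$, I would first choose $c_j \in B \setminus (A_0 \cup \cdots \cup A_j)$ with $c_j \geq N_j$ and set $f(c_j) := h(c_j)$; for the remaining $k \in [N_j, c_j)$ I would pick $f(k)$ outside the finite forbidden set $\simpleset{f_0(k), \dots, f_j(k)} \cup \set{g_n(k)}{n \leq j \text{ and } k \in A_n}$, which is possible since $\omega$ is infinite. Setting $N_{j+1} := c_j + 1$ makes $f$ total, and $C := \set{c_j}{j < \omega} \in \infsubset{B}$ witnesses (c). For (a), fixing $m$: off $C$ every $k$ filled at a stage $\geq m$ satisfies $f(k) \neq f_m(k)$, so there are only finitely many agreements off $C$, while on $C$ the agreements are confined to the finite set $\set{k \in B}{h(k) = f_m(k)}$ because $h$ is eventually different from $f_m \restr B$. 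For (b), fixing $n$: off $C$ every $k \in A_n$ filled at a stage $\geq n$ satisfies $f(k) \neq g_n(k)$, and $A_n \cap C$ is finite because $c_j \notin A_n$ whenever $j \geq n$, so only $c_0, \dots, c_{n-1}$ can collide with $g_n$.

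The main obstacle is the tension between the positive requirement (c) and the avoidance (b): the points $c_j$ where $f$ must equal $h$ are not chosen to dodge $g_n$, and on $A_n \cap B$ the functions $h$ and $g_n$ could well agree infinitely often. What resolves it is precisely that $A_n \in \I$ while $B \notin \I$: this both guarantees infinitely many legal choices for the $c_j$ and forces each fixed $g_n$ to meet $C$ only among $c_0, \dots, c_{n-1}$, keeping the unavoidable collisions finite.
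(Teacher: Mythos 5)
Your proof is correct and takes essentially the same approach as the paper: build $f$ in finite blocks, at each stage copying $h$ at a point of $B \setminus (A_0 \cup \cdots \cup A_j)$ (infinite since $B \notin \I$, the $A_n$'s lie in $\I$, and $\I \supseteq \Fin$), and filling the remaining points so as to avoid the finitely many relevant values of the $f_m$ and $g_n$. The only cosmetic difference is that you verify eventual difference from the $f_m$ on the copy set $C$ after the fact, using that $h$ is e.d.\ from each $f_m \restr B$, whereas the paper builds this constraint directly into the choice of the copy points.
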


	\begin{proof}
		We define an increasing sequence of finite partial functions $\seq{s_n}{n < \omega}$ as follows.
		Set $s_0 := \emptyset$.
		Now, let $n < \omega$ and assume $s_n$ is defined.
		By assumption, choose $K < \omega$ such that $\dom(s_n) \subseteq K$ and for all $k \in B \setminus K$ and $m < n$ we have $f_m(k) \neq h(k)$.
		Since $\I$ contains $\Fin$ and $B \notin \I$, the set $B \setminus \bigcup_{m < n}A_m$ is infinite, so choose $k \in B \setminus K$ with $k \notin A_m$ for all $m < n$.
		Now, set $s'_{n + 1} := s_n \cup \simpleset{(k, h(k))}$.
		Finally, if $n \in \dom(s'_{n + 1})$ set $s_{n + 1} := s'_{n + 1}$, otherwise choose $l < \omega$ such that $l \neq f_m(n)$ for all $m < n$ and $l \neq g_m(n)$ for all $m < n$ with $n \in A_m$ and set $s_{n + 1} := s'_{n + 1} \cup \simpleset{(n, l)}$.
		
		Set $f := \bigcup_{n < \omega} s_n$.
		Then, $f :\omega \to \omega$ as $n \in \dom(s_{n + 1})$ for all $n < \omega$.
		Further, we verify (1-3):
		\begin{enumerate}
			\item Let $m < \omega$, then for every $n > m$ and $k \in \dom(s_{n + 1}) \setminus \dom(s_n)$ we compute that $f(k) = s_{n + 1}(k) \neq f_m(k)$ by choice of $K$ or $l$, i.e.\ $f$ and $f_m$ are e.d.,
			\item Let $m < \omega$, then for every $n > m$ and $k \in (A_m \cap \dom(s_{n + 1})) \setminus \dom(s_n)$ we compute that $f(k) = s_{n + 1}(k) \neq g_m(k)$ by choice of $k$ or $l$, i.e.\ $f \restr A_m$ and $g_m$ are e.d.,
			\item For every $n < \omega$ there is a $k \in \dom(s_{n + 1}) \setminus \dom(s_n)$ such that $f(k) = s_{n + 1}(k) = h(k)$, i.e. $f \restr C = h \restr C$ for some $C \in \infsubset{B}$.
		\end{enumerate}
		Hence, $f$ is as desired.
	\end{proof}

	\begin{theorem}\label{THM_Realize_Ideal}
		Assume {\sf CH} and let $\I$ be an ideal.
		Then there is a maximal eventually different family $\F$ such that $\I = \I_0(\F)$.
	\end{theorem}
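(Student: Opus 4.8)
The plan is to build an increasing chain of countable eventually different families $\seq{\F_\alpha}{\alpha < \omega_1}$ with union $\F := \bigcup_{\alpha < \omega_1} \F_\alpha$, together with a collection of \emph{witnesses} $\set{g_A}{A \in \I \cap \infsubset{\omega}}$, where each $g_A : A \to \omega$ is to remain eventually different from $\F \restr A$. The witnesses will certify that $A \in \I_0(\F)$ for $A \in \I$, while a bookkeeping of \emph{catching tasks} will certify that $A \notin \I_0(\F)$ for $A \notin \I$. Using {\sf CH} I would enumerate in order type $\omega_1$ all catching tasks $(A,h)$ with $A \in \infsubset{\omega} \setminus \I$ and $h : A \to \omega$, together with all witness tasks $A \in \I \cap \infsubset{\omega}$, interleaving the two lists so that every task of either kind is treated at some stage $\alpha < \omega_1$.

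The two recursion invariants to maintain at stage $\alpha$ are: (I1) $\F_\alpha$ is eventually different, and (I2) for every witness $g_A$ introduced before stage $\alpha$, the family $\F_\alpha \restr A \cup \simpleset{g_A}$ is eventually different. The observation that makes everything go through is that any stage $\alpha < \omega_1$ is a countable ordinal, so both $\F_\alpha$ and the set of already-introduced witnesses are countable; hence I may list the active witnesses as $\seq{g_n : A_n \to \omega}{n < \omega}$ and apply Lemma~\ref{LEM_Diagonalize_With_Witnesses} verbatim. Concretely, to treat a witness task $A$ I apply Lemma~\ref{LEM_Extend_Witnesses} to $\F_\alpha$ and $A$, obtaining $g_A : A \to \omega$ with $\F_\alpha \restr A \cup \simpleset{g_A}$ eventually different, and add $g_A$ to the witness collection (without extending $\F$). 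To treat a catching task $(A,h)$ I first ask whether $h$ is eventually different from $\F_\alpha \restr A$: if not, then $h =^\infty f \restr A$ for some $f \in \F_\alpha$ and there is nothing to do, since $h$ is already caught permanently; if so, I apply Lemma~\ref{LEM_Diagonalize_With_Witnesses} with $B := A$ and the current witnesses to produce a function satisfying (1)--(3), which I append to $\F$. Clauses (1) and (2) of that lemma are exactly what preserve (I1) and (I2), and clause (3) yields $f \restr C = h \restr C$ for some $C \in \infsubset{A}$, so $h$ becomes caught. At limit stages I take unions; since eventual difference is a pairwise condition, and each witness was introduced eventually different from the whole earlier family while every later extension preserved it, both invariants pass through limits.

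It then remains to verify $\I_0(\F) = \I$. For infinite $A \in \I$, the witness $g_A$ is eventually different from $\F \restr A$ by (I2), so $\F \restr A$ is not maximal and $A \in \I_0(\F)$; finite sets lie in $\Fin \subseteq \I_0(\F)$ anyway. Conversely, if $A \notin \I$ is infinite and $g : A \to \omega$ is arbitrary, then the task $(A,g)$ was treated at some stage, after which $g$ agrees infinitely often with some $f \restr A$, $f \in \F$; hence $\F \restr A$ is maximal and $A \notin \I_0(\F)$. Since $\I$ is proper we have $\omega \notin \I$, so applying this with $A = \omega$ shows $\F = \F \restr \omega$ is a maximal eventually different family and $\I_0(\F)$ is proper.

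The main obstacle I anticipate is the bookkeeping that keeps the witness collection usable: a witness for a set in $\I$ must be pinned down once and then survive \emph{every} later extension of $\F$, and there may be $\aleph_1$-many such sets, whereas Lemma~\ref{LEM_Diagonalize_With_Witnesses} only preserves countably many witnesses at a time. This tension is dissolved precisely by the fact that each stage is a countable ordinal, so only countably many witnesses are ever active simultaneously; the remaining care is to interleave the two task-lists so that each of the $\aleph_1$ catching tasks and $\aleph_1$ witness tasks is reached, and to check—routinely—that a witness introduced late via Lemma~\ref{LEM_Extend_Witnesses} (hence eventually different from the current countable family) is then protected by clause (2) at every subsequent application of Lemma~\ref{LEM_Diagonalize_With_Witnesses}.
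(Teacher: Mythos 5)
Your proposal is correct and takes essentially the same route as the paper's own proof: the same two lemmas (Lemma~\ref{LEM_Extend_Witnesses} and Lemma~\ref{LEM_Diagonalize_With_Witnesses}), the same recursion invariants, and the same final verification, including the conditional treatment of $h$ already caught by the current family. The only difference is bookkeeping --- the paper handles one witness task and one catching task at every stage $\alpha$ via two aligned $\aleph_1$-enumerations, whereas you interleave the two task lists into a single one --- which is immaterial.
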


	\begin{proof}
		Enumerate all functions $\set{h_\alpha:B_\alpha \to \omega}{\alpha < \aleph_1}$, where $B_\alpha \notin \I$, and $\I$ by $\seq{A_\alpha}{\alpha < \aleph_1}$.
		We inductively construct a m.e.d.\ family $\seq{f_\alpha}{\alpha < \aleph_1}$ and functions $\seq{g_\alpha:A_\alpha \to \omega}{\alpha < \aleph_1}$ while preserving the following properties for every $\alpha < \aleph_1$:
		\begin{enumerate}
			\item $\F_{<\alpha} := \set{f_\beta}{\beta < \alpha}$ is e.d.,
			\item $\F_{<\alpha} \restr A_\beta \cup \simpleset{g_\beta}$ is e.d. for all $\beta < \alpha$,
			\item If $\F_{<\alpha} \restr B_\alpha \cup \simpleset{h_\alpha}$ is e.d., then $f_\alpha \restr C = h_\alpha \restr C$ for some $C \in \infsubset{B_\alpha}$.
		\end{enumerate}
		Let $\alpha < \aleph_1$.
		By induction assumption we may apply Lemma~\ref{LEM_Diagonalize_With_Witnesses} to $\F_{<\alpha}$, $\seq{g_\beta:A_\alpha \to \omega}{\beta < \alpha}$ and $h_\alpha: B_\alpha \to \omega$ to obtain $f_\alpha:\omega \to \omega$ such that
		\begin{enumerate}
			\item $\F_{<\alpha} \cup \simpleset{f_\alpha}$ is e.d.,
			\item $(\F_{<\alpha} \cup \simpleset{f_\alpha}) \restr A_\beta \cup \simpleset{g_\beta}$ is e.d.\ for all $\beta < \alpha$,
			\item If $\F_{<\alpha} \restr B_\alpha \cup \simpleset{h_\alpha}$ is e.d., then $f_\alpha \restr C = h_\alpha \restr C$ for some $C \in \infsubset{B_\alpha}$.
		\end{enumerate}
		Finally, by Lemma~\ref{LEM_Extend_Witnesses} choose $g_\alpha:A_\alpha \to \omega$ such that $(\F_{<\alpha} \cup \simpleset{f_\alpha}) \restr A_\alpha \cup \simpleset{g_\alpha}$ is e.d.
		
		Then, by (1) $\F := \seq{f_\alpha}{\alpha < \aleph_1}$ is e.d.\ and we claim that $\I = \I_0(\F)$.
		First, let $A \in \I$.
		Choose $\alpha < \aleph_1$ such that $A = A_\alpha$.
		By (2) we have that $\F \restr A \cup \simpleset{g_\alpha}$ is e.d., which witnesses $A \in \I_0(\F)$.
		
		Secondly, let $B \notin \I$ and assume $B \in \I_0(\F)$.
		Choose $h:B \to \omega$ such that $\F \restr B \cup \simpleset{h}$ is e.d.
		Choose $\alpha < \aleph_1$ such that $B = B_\alpha$ and $h = h_\alpha$.
		Then also $\F_{<\alpha} \restr B \cup \simpleset{h}$ is e.d.
		Thus, by (3) we have $f_\alpha \restr C = h \restr C$ for some $C \in \infsubset{B}$, which contradicts that $f_\alpha \restr B$ and $h$ are e.d.
	\end{proof}

	Notice, that with Theorem~\ref{THM_Realize_Ideal} under $\sf CH$ we may construct Van Douwen families $\F$, so that $\I_0(\F)$ is a maximal ideal.
	These kind of maximal eventually families are in some sense as far as possible away from being Van Douwen while still being maximal.
	Hence, we may define
	
	\begin{definition}
		A maximal eventually family is called very non Van Douwen iff $\I_0(\F)$ is a maximal ideal.
		Equivalently, for any $A \in \infsubset{\omega} \setminus \cofin(\omega)$ exactly one of the following holds:
		\begin{enumerate}
			\item Either there is $g:A \to \omega$ such that $\F \restr A \cup \simpleset{g}$ is e.d.,
			\item or there is $g: A^c \to \omega$ such that $\F \restr A^c \cup \simpleset{g}$ is e.d.
		\end{enumerate}
	\end{definition}

	\noindent Similar to Van Douwen's question we may ask:
	
	\begin{question}
		Does there always exist a very non Van Douwen m.e.d.\ family?
	\end{question}

		Further, under $\sf CH$ these very non Van Douwen m.e.d.\ families may have a P-ideal as their associated ideal, whose maximality is preserved by Sacks \cite{Shelah_2017} or Miller \cite{Miller_1984} forcing.
	Note, this does not imply that the maximality of $\F$ is preserved, as the associated ideal interpreted in the generic extension may grow compared to the associated ideal interpreted in the ground model.
	
	\begin{definition}
		Let $\F$ be a maximal eventually different family and $\PP$ be a forcing.
		We say that $\PP$ preserves $\I_0(\F)$ or $\I_0(\F)$ is $\PP$-indestructible iff for every $\PP$-generic $G$ in $V[G]$ we have that $\I_0(\F) = \langle \I_0(\F)^V \rangle$, where $\langle\cdot\rangle$ is the generated ideal.
	\end{definition}

	\begin{question}
		For which forcings may we construct non Douwen m.e.d.\ families, so that their associated ideals are preserved?
	\end{question}
	
	These considerations are particularly interesting towards a possible proof of the consistency of $\aE < \av$, as in this case a non Van Douwen m.e.d.\ family which remains maximal under some forcing which destroys the Van Douwenness of some other family is desirable.
	
	As a proof of concept for the remainder of this paper we show that assuming {\sf CH} we may indeed construct maximal eventually different families with associated ideal indestructible by (any) Sacks forcing.
	In order to construct such a family, we use the framework developed Fischer and the author in \cite{FischerSchembecker_2023}.
	That is, to obtain a maximal eventually different family with its associated ideal indestructible by any kind of Sacks-forcing it suffices to construct such a family whose associated ideal is indestructible by $\SS^{\aleph_0}$, the fully supported product of Sacks-forcing of size $\aleph_0$:
	
	\begin{corollary}\cite{FischerSchembecker_2023}\label{COR_Indestructibility_Implication}
		Let $\F$ be a maximal eventually different family such that its associated ideal $\I_0(\F)$ is $\SS^{\aleph_0}$-indestructible.
		Then $\I_0(\F)$ is also indestructible by any countable supported product or iteration of Sacks-forcing.
	\end{corollary}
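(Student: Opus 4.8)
The plan is to reduce, via the standard factorization and fusion machinery for Sacks forcing, indestructibility of $\I_0(\F)$ by an arbitrary countable support product or iteration to indestructibility by $\SS^{\aleph_0}$. First I would record what actually has to be shown. Since any ground model witness $g : A \to \omega$ for the non-maximality of $\F \restr A$ (i.e.\ $A \in \I_0(\F)^V$) stays eventually different from $\F \restr A$ in every extension, finiteness of each agreement set being absolute, one always has $\langle \I_0(\F)^V \rangle \subseteq \I_0(\F)^{V[G]}$, so only the reverse inclusion carries content. Writing $\PP$ for the product or iteration under consideration, by Definition~\ref{DEF_Associated_Ideal} it suffices to prove: for every $\PP$-name $\dot B$ for an infinite subset of $\omega$ with $\Vdash \dot B \notin \langle \I_0(\F)^V \rangle$, every $\PP$-name $\dot h$ for a function with domain $\dot B$, and every condition $p$, there are $q \leq p$ and $f \in \F$ with $q \Vdash f \restr \dot B =^\infty \dot h$. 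This is exactly the statement that $\F \restr \dot B$ stays maximal eventually different, hence $\dot B \notin \I_0(\F)^{V[G]}$.

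For the product $\PP = \SS^I$ I would use that countable support products of Sacks forcing are proper and $\omega^\omega$-bounding and, crucially, that every real of the extension already belongs to the subextension by a countable set of coordinates. Given $p, \dot B, \dot h$ as above, let $S \subseteq I$ be the countable union of $\supp(p)$ with the sets of coordinates on which $\dot B$ and $\dot h$ depend. Factoring $\SS^I \cong \SS^S \times \SS^{I \setminus S}$, the names $\dot B$ and $\dot h$ may be regarded as $\SS^S$-names, and $\SS^S \cong \SS^{\aleph_0}$. Applying $\SS^{\aleph_0}$-indestructibility below $p \restr S$ yields $q_S \leq p \restr S$ and $f \in \F$ with $q_S \Vdash f \restr \dot B =^\infty \dot h$; since this statement mentions only $\SS^S$-names, the condition $q := q_S \cup (p \restr (I \setminus S))$ forces it in the full product.

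The iteration case is the main obstacle and is where the framework of \cite{FischerSchembecker_2023} does the real work, since one cannot simply split off a countable factor: the iterands live in successive generic extensions. Here I would run a fusion through the iteration, using at each coordinate that the iterand is Sacks forcing and enjoys continuous reading of names, to produce $q \leq p$ below which $\dot B$ and $\dot h$ are read continuously and the tree of their possible values is supported on a countable sub-iteration. The heart of the argument is then to code the countably many Sacks reals appearing along this fusion by a single $\SS^{\aleph_0}$-generic, transfer the indestructibility witness $f \in \F$ obtained for $\SS^{\aleph_0}$ back along this coding, and verify that $f$ genuinely forces $f \restr \dot B =^\infty \dot h$ below $q$ in the iteration. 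Limit stages of the fusion demand the usual care to maintain continuous reading of names and countable support, while successor stages reduce the newly added information to one further Sacks real.

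I expect the delicate point to be precisely this last coding-and-transfer step: ensuring that a witness extracted from the product $\SS^{\aleph_0}$ remains a witness once interpreted inside the iteration, rather than only inside the coded product. This compatibility is exactly what the preservation theorem of \cite{FischerSchembecker_2023} is built to supply, so at that point I would invoke it rather than reprove the fusion bookkeeping from scratch. Once the displayed infinitely-often-equality is secured for every such $\dot h$, the definition of $\I_0(\F)$ gives $\dot B \notin \I_0(\F)^{V[G]}$, completing the reverse inclusion and hence the equality $\I_0(\F) = \langle \I_0(\F)^V \rangle$ in every countable support product or iteration extension.
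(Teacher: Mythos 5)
First, a point of comparison: the paper does not prove this corollary at all --- it is quoted verbatim from \cite{FischerSchembecker_2023} and used as a black box. So your attempt can only be judged on its own merits, not against an argument in the paper. On those merits: your reduction of indestructibility to the density statement (for every name $\dot{h}:\dot{B} \to \omega$ with $\dot{B}$ forced outside $\langle \I_0(\F)^V \rangle$ and every condition $p$, find $q \extends p$ and $f \in \F$ with $q \forces f \restr \dot{B} =^\infty \dot{h}$) is correct, as is the observation that $\langle \I_0(\F)^V \rangle \subseteq \I_0(\F)^{V[G]}$ is automatic by absoluteness of eventual difference. The product case is also essentially right: below a fusion one may assume $\dot{B}$ and $\dot{h}$ are $\SS^S$-names for a countable $S \supseteq \supp(p)$, then $\SS^S \cong \SS^{\aleph_0}$, and both the hypothesis $\dot{B} \notin \langle \I_0(\F)^V \rangle$ and the conclusion $f \restr \dot{B} =^\infty \dot{h}$ are arithmetic in reals of the intermediate model, hence transfer between $V[G_S]$ and $V[G]$. (One imprecision: names do not outright ``depend on countably many coordinates''; this holds only densely, below a fusion --- which is all you need, but it should be stated that way.)

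The genuine gap is the iteration case, and it is not peripheral: it is the entire content of the corollary. Iterations cannot be handled by your factorization scheme, since a real added at stage $\alpha + 1$ of a countable support iteration need not lie in any countable sub-iteration extension --- the iterand at stage $\alpha$ is itself a name depending on the whole generic below $\alpha$, so there is no analogue of splitting off $\SS^S$. Your proposed fix --- read $\dot{B}, \dot{h}$ continuously below a fusion, ``code'' the generic information along the fusion by an $\SS^{\aleph_0}$-generic, and transfer the witness $f$ back into the iteration --- is the right intuition, but all of the difficulty lives in that transfer: one must show the fusion skeleton carries a product structure through which $\SS^{\aleph_0}$-indestructibility can be applied so that the resulting product condition and witness pull back to a condition in the iteration forcing the agreement. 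At exactly this point you declare that you would ``invoke the preservation theorem of \cite{FischerSchembecker_2023}'' --- but that theorem \emph{is} this corollary (specialized to $\I_0(\F)$), so as a proof your argument is circular there; it reduces to the same citation the paper itself makes. In summary: a genuine, self-contained argument for products; for iterations, a correct identification of where the difficulty lies, but no proof of it.
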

	
	Thus, it suffices to restrict our attention to $\SS^{\aleph_0}$.
	We will need to refine Lemma~\ref{LEM_Diagonalize_With_Witnesses} to also diagonalize against $\SS^{\aleph_0}$-names $\dot{h}:\dot{B} \to \omega$, which may extend the associated ideal in the Sacks-extension.
	$\SS^{\aleph_0}$ satisfies Axiom A/has fusion, we assume the reader is familiar with the usual fusion order of $\SS^{\aleph_0}$ and the notion of suitable functions.
	For more details see \cite{FischerSchembecker_2023} or \cite{Kanamori_1980}.
	
	\begin{lemma} \label{LEM_Diagonalize_With_Sacks_Witnesses}
		Let $\I$ be an ideal, $\F = \seq{f_n}{n < \omega}$ be e.d.\ and $\seq{g_n:A_n \to \omega}{n < \omega}$ be such that $A_n \in \I$ and $\F \restr A_n \cup \simpleset{g_n}$ is e.d.\ for all $n < \omega$.
		Further, let $p \in \SS^{\aleph_0}$ and $\dot{h}$ be a $\SS^{\aleph_0}$-name such that
		\[
			p \forces \dot{h}:\dot{B} \to \omega \text{ such that } \dot{B} \notin \langle\I\rangle \text{ and } \F \restr \dot{B} \cup \simpleset{\dot{h}} \text{ is e.d.}
		\]
		Then there is $q \extends p$ and $f: \omega \to \omega$ such that
		\begin{enumerate}
			\item $\F \cup \simpleset{f}$ is e.d.,
			\item $(\F \cup \simpleset{f}) \restr A_n \cup \simpleset{g_n}$ is e.d.\ for all $n < \omega$,
			\item $q \forces f \restr \dot{C} = \dot{h} \restr \dot{C}$ for some $\dot{C} \in \infsubset{\dot{B}}$.
		\end{enumerate}
	\end{lemma}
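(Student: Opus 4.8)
The plan is to mirror the construction in the proof of Lemma~\ref{LEM_Diagonalize_With_Witnesses}, building $f$ as a union of an increasing sequence of finite partial functions $\seq{s_n}{n < \omega}$, but to interleave this with a fusion argument for $\SS^{\aleph_0}$ in order to cope with the fact that $\dot{h}$ and $\dot{B}$ are merely names. Concretely, I would construct simultaneously a fusion sequence $\seq{p_n}{n<\omega}$ with $p_0 = p$ and $p_{n+1} \leq_n p_n$ in the fusion order (so in particular $p_{n+1} \extends p_n$), set $f := \bigcup_{n<\omega} s_n$, and let $q$ be the fusion limit. The fill-in of $f$ at the integer $n$ to guarantee totality, and the choice of fill-in values avoiding $f_m$ and avoiding $g_m$ on $A_m$, is carried out verbatim as in Lemma~\ref{LEM_Diagonalize_With_Witnesses}; the genuinely new content is how property (3) is secured against the name $\dot{h}$.

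At stage $n$ the condition $p_n$ determines, through its $n$-th splitting front on the first $n$ coordinates, finitely many components $\set{p_n \restr \sigma}{\sigma \in \Sigma_n}$, each of which is a condition extending $p$. I would process these one at a time. Fix $\sigma \in \Sigma_n$ and work below $r := p_n \restr \sigma$. Since $r \extends p$ forces $\F \restr \dot{B} \cup \simpleset{\dot{h}}$ to be e.d., for each $m < n$ the set $\set{k \in \dot{B}}{\dot{h}(k) = f_m(k)}$ is forced finite, so I may strengthen $r$ to decide a ground-model bound $\hat{K}$ with $r' \forces \dot{h}(k) \neq f_m(k)$ for all $m < n$ and all $k \in \dot{B} \setminus \hat{K}$. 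Because $r' \extends p$ forces $\dot{B} \notin \langle \I \rangle$ while $\bigcup_{m<n} A_m \in \I \subseteq \langle \I \rangle$, the set $\dot{B} \setminus \bigcup_{m<n} A_m$ is forced to lie outside $\langle\I\rangle$, hence to be infinite. Thus I may further strengthen $r'$ to some $r_\sigma$ deciding a specific ground-model integer $\hat{k}_\sigma \in \dot{B} \setminus \bigcup_{m<n}A_m$ with $\hat{k}_\sigma$ larger than $\hat{K}$, larger than $\max(\dom(s_n))$, and larger than all integers chosen at earlier stages or for earlier components, together with a value $l_\sigma$ such that $r_\sigma \forces \dot{h}(\hat{k}_\sigma) = l_\sigma$. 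I then commit $f(\hat{k}_\sigma) := l_\sigma$. By the choice of $\hat{K}$ we have $l_\sigma \neq f_m(\hat{k}_\sigma)$ for all $m < n$, and since $\hat{k}_\sigma \notin A_m$ for $m<n$ this commitment does not interfere with the diagonalization against the $g_m$. Amalgamating the strengthenings $\set{r_\sigma}{\sigma \in \Sigma_n}$ back together yields $p_{n+1} \leq_n p_n$, using that the fusion order of $\SS^{\aleph_0}$ permits replacing each component by an arbitrary extension.

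With $q$ the fusion limit and $f := \bigcup_n s_n$, properties (1) and (2) follow exactly as in Lemma~\ref{LEM_Diagonalize_With_Witnesses}: for each fixed $m$, every value committed at or after stage $m+1$ — whether an agreement value $l_\sigma$ or a fill-in value — avoids $f_m$, and on $A_m$ avoids $g_m$, so only finitely many exceptions occur. For property (3), observe that any $\SS^{\aleph_0}$-generic $G$ with $q \in G$ passes, at each stage $n$, through a unique $\sigma \in \Sigma_n$ with $q \restr \sigma \in G$, and since $q \restr \sigma \extends r_\sigma$ upward closure gives $r_\sigma \in G$; hence $V[G]$ satisfies $\hat{k}_\sigma \in \dot{B}$ and $\dot{h}(\hat{k}_\sigma) = l_\sigma = f(\hat{k}_\sigma)$. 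As the integers $\hat{k}_\sigma$ chosen across the stages are pairwise distinct, these agreements accumulate to an infinite set, so $q \forces f \restr \dot{C} = \dot{h} \restr \dot{C}$ for the name $\dot{C} \in \infsubset{\dot{B}}$ collecting them.

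I expect the main obstacle to be the fusion bookkeeping of the second paragraph: one must check that strengthening each component to decide its own agreement point and then amalgamating genuinely produces a legitimate $\leq_n$-extension inside the suitable-function framework of $\SS^{\aleph_0}$, and that exactly one agreement point is committed per component so that every branch through $q$ collects infinitely many pairwise distinct agreements without over-committing values of $f$. The purely combinatorial heart — using $\dot{B} \notin \langle\I\rangle$ to locate points of $\dot{B}$ outside $\bigcup_{m<n}A_m$ and beyond the disagreement threshold $\hat{K}$ — is a direct adaptation of the ground-model argument; it is the interaction with the product fusion, rather than this core, that requires care.
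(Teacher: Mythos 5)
Your proposal is correct and follows essentially the same route as the paper's proof: an increasing sequence of finite partial functions interleaved with a fusion sequence, where at stage $n$ each splitting-front component (suitable function) of $p_n$ is strengthened to decide one agreement point of $\dot{B}\setminus\bigcup_{m<n}A_m$ past the eventual-difference threshold together with its $\dot h$-value, and the strengthened components are amalgamated into a $\leq_n$-extension before passing to the fusion limit. The only differences are presentational — the paper gives the amalgamation formula explicitly and cites the ground-model lemma for the choice of $k$ and $l$, while you spell out the bound $\hat K$ and the genericity argument for (3) — but the underlying argument is identical.
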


	\begin{proof}
		As before we define an increasing sequence of finite partial functions $\seq{s_n}{n < \omega}$ but now alongside a fusion sequence $\seq{p_n}{n < \omega}$ below $p$.
		Set $s_0 := \emptyset$ and $p_0 := p$.
		Now, let $n < \omega$ and assume that $s_n$ and $p_n$ have been defined.
		As in the proof of Lemma~\ref{LEM_Diagonalize_With_Witnesses} we may assume that $n \in \dom(s_n)$.
		Let $\seq{\sigma_m}{m < M}$ enumerate all suitable functions for $p_n$ and $n$.
		We define an increasing sequence of finite partial functions $\seq{t_m}{m \leq M}$ extending $s_n$ and $\leq_n$-extensions $\seq{q_m}{m \leq M}$ of $p_n$ as follows:
		
		Set $t_0 := s_n$ and $q_0 := p_n$.
		Now, let $m < M$ and assume $s_m$ and $q_m$ have been defined.
		By assumption we have
		\[
			q_m \restr \sigma_m \forces \dot{h}:\dot{B} \to \omega \text{ such that } \dot{B} \notin \langle\I\rangle \text{ and } \F \restr \dot{B} \cup \simpleset{\dot{h}} \text{ is e.d.}
		\]
		As in the proof of Lemma~\ref{LEM_Diagonalize_With_Witnesses} choose $r_m \extends q_m \restr \sigma_m$ and $k, l < \omega$ with $k \notin \dom(t_n) \cup 
		\bigcup_{\bar{n} < n} A_{\bar{n}}$ and
		\[
			r_m \forces k \in \dot{B} \text{ and } \dot{h}(k) = l \neq f_{\bar{n}}(k) \text{ for all } \bar{n} < n.
		\]
		Finally, we set $t_{m + 1} := t_m \cup \simpleset{(k, l)}$ and define $q_{m+1} \leq_n q_m$ with $q_{m + 1} \restr \sigma_m = r_m$ by 
		\[
			q_{m+1}(\alpha) :=
			\begin{cases}
				r_m(\alpha) \cup \bigcup\set{q_m(\alpha) \restr \sigma_{\bar{m}}}{\bar{m} < M, \bar{m} \neq m} &\text{if } \alpha < n,\\
				r_m(\alpha) & \text{otherwise}.
			\end{cases}
		\]
		Now, set $p_{n+1} := q_M$ and $s_{n+1} := t_M$.
		By construction we have that
		\[
			p_{n+1} \forces \exists k \in \dom(s_{n + 1}) \setminus \dom(s_n) \text{ such that } \dot{h}(k) = s_{n+1}(k)
		\]
		and for every $k \in \dom(s_{n + 1}) \setminus \dom(s_n)$ and $\bar{n} < n$ we have $s_{n+1}(k) \neq f_{\bar{n}}(k)$.
		
		Set $f := \bigcup_{n < \omega} s_n$ and let $q$ be the fusion of $\seq{p_n}{n < \omega}$.
		As before, we have that $f:\omega \to \omega$ and $f$ and $q$ satisfy (1-3).
	\end{proof}

	\begin{theorem}\label{THM_Realize_Ideal_Sacks}
		Assume {\sf CH} and let $\I$ be an ideal.
		Then there is a maximal eventually different family such that $\I = \I_0(\F)$ and $\I_0(\F)$ is indestructible by any countable support iteration or product of Sacks-forcing.
	\end{theorem}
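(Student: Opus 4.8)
The plan is to run the same transfinite recursion of length $\aleph_1$ as in Theorem~\ref{THM_Realize_Ideal}, but to replace the ground-model diagonalization of Lemma~\ref{LEM_Diagonalize_With_Witnesses} by its Sacks-version Lemma~\ref{LEM_Diagonalize_With_Sacks_Witnesses}, so that in addition to killing ground-model witnesses we also kill $\SS^{\aleph_0}$-names for potential witnesses living outside $\langle\I\rangle$. By Corollary~\ref{COR_Indestructibility_Implication} it suffices to make $\I_0(\F)$ indestructible by the single forcing $\SS^{\aleph_0}$, so I only have to diagonalize against $\SS^{\aleph_0}$-names. For the bookkeeping I use that under {\sf CH} we have $\left|\SS^{\aleph_0}\right| = \aleph_1$ and only $\aleph_1$-many nice $\SS^{\aleph_0}$-names for partial functions $\omega \partialto \omega$ exist, so I can enumerate $\I = \seq{A_\alpha}{\alpha < \aleph_1}$ together with all pairs $\seq{(p_\alpha, \dot{h}_\alpha)}{\alpha < \aleph_1}$, where $p_\alpha \in \SS^{\aleph_0}$ and $\dot{h}_\alpha$ is a nice name with domain-name $\dot{B}_\alpha$.

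I then build an e.d.\ family $\F = \seq{f_\alpha}{\alpha < \aleph_1}$ and witnesses $g_\alpha : A_\alpha \to \omega$ maintaining, exactly as before, that (i) $\F_{<\alpha}$ is e.d.\ and (ii) $\F_{<\alpha}\restr A_\beta \cup \simpleset{g_\beta}$ is e.d.\ for all $\beta < \alpha$; note $\F_{<\alpha}$ is countable at every stage $\alpha < \aleph_1$, which is what Lemma~\ref{LEM_Diagonalize_With_Sacks_Witnesses} requires. The recursion step at $\alpha$ splits into two cases. If $p_\alpha \forces \dot{B}_\alpha \notin \langle\I\rangle$ and $p_\alpha \forces \F_{<\alpha}\restr \dot{B}_\alpha \cup \simpleset{\dot{h}_\alpha}$ is e.d., then I apply Lemma~\ref{LEM_Diagonalize_With_Sacks_Witnesses} to $\I$, $\F_{<\alpha}$, $\seq{g_\beta}{\beta < \alpha}$ and $(p_\alpha, \dot{h}_\alpha)$ to obtain $f_\alpha$ and some $q_\alpha \extends p_\alpha$ with $q_\alpha \forces f_\alpha \restr \dot{C} = \dot{h}_\alpha \restr \dot{C}$ for an infinite $\dot{C} \subseteq \dot{B}_\alpha$; clauses (1),(2) of the lemma preserve (i),(ii). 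Otherwise I apply Lemma~\ref{LEM_Diagonalize_With_Witnesses} to a dummy witness furnished by Lemma~\ref{LEM_Extend_Witnesses} to obtain some $f_\alpha$ preserving (i),(ii). In either case I finish the stage by invoking Lemma~\ref{LEM_Extend_Witnesses} to pick $g_\alpha:A_\alpha \to \omega$ with $(\F_{<\alpha}\cup\simpleset{f_\alpha})\restr A_\alpha \cup \simpleset{g_\alpha}$ e.d., preserving (ii). Limits are unions.

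Then I verify $\I = \I_0(\F)$ in $V$ exactly as in Theorem~\ref{THM_Realize_Ideal}: the instances $(1_{\SS^{\aleph_0}}, \check{h})$ are among the enumerated pairs, so the ground-model argument is subsumed and $\F$ is a m.e.d.\ family with $\I_0(\F)^V = \I$. For indestructibility I fix a $\SS^{\aleph_0}$-generic $G$. The inclusion $\langle\I\rangle \subseteq \I_0(\F)^{V[G]}$ is easy: if $A \pseudosubseteq A_\alpha$ then the ground-model witness $g_\alpha$ still shows $\F \restr A$ is not maximal in $V[G]$. For the reverse inclusion I suppose towards a contradiction that some $B \in \I_0(\F)^{V[G]}\setminus\langle\I\rangle$ is witnessed by $h:B\to\omega$ with $\F\restr B \cup \simpleset{h}$ e.d., and pick $p \in G$ and a nice name $\dot{h}$ forcing all of this, say $(p,\dot{h}) = (p_\alpha,\dot{h}_\alpha)$. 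Since a subfamily of an e.d.\ family is e.d., $p_\alpha$ also forces $\F_{<\alpha}\restr \dot{B}_\alpha \cup \simpleset{\dot{h}_\alpha}$ e.d., so the first case triggered at stage $\alpha$; hence $q_\alpha \extends p_\alpha$ forces $f_\alpha \restr \dot{C} = \dot{h}_\alpha\restr \dot{C}$ for an infinite $\dot{C} \subseteq \dot{B}_\alpha$, i.e.\ $q_\alpha$ forces that $f_\alpha \in \F$ is infinitely often equal to $\dot{h}_\alpha$ on $\dot{B}_\alpha$, contradicting that $q_\alpha \extends p_\alpha$ forces $\F\restr \dot{B}_\alpha \cup \simpleset{\dot{h}_\alpha}$ e.d.

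The point I would stress as the crux is that this final contradiction is obtained purely at the level of conditions: I never need $q_\alpha \in G$, only $q_\alpha \extends p_\alpha$, so a single application of Lemma~\ref{LEM_Diagonalize_With_Sacks_Witnesses} per name suffices and the recursion need not chase a maximal antichain below each $p_\alpha$. Finally, Corollary~\ref{COR_Indestructibility_Implication} upgrades the $\SS^{\aleph_0}$-indestructibility of $\I_0(\F)$ to indestructibility by any countable support product or iteration of Sacks forcing, completing the proof. The genuinely technical obstacle lives entirely inside Lemma~\ref{LEM_Diagonalize_With_Sacks_Witnesses} — the fusion argument threading the witness-preservation clauses (1),(2) through the Sacks names, which I may assume — whereas at the level of the theorem the only delicate points are the {\sf CH} name-counting and checking that the hypotheses of that lemma survive every stage of the recursion.
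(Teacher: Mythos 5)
Your proposal is correct and follows essentially the same route as the paper: reduce to $\SS^{\aleph_0}$-indestructibility via Corollary~\ref{COR_Indestructibility_Implication}, enumerate pairs $(p_\alpha,\dot{h}_\alpha)$ of conditions and names under {\sf CH}, run the recursion of Theorem~\ref{THM_Realize_Ideal} with Lemma~\ref{LEM_Diagonalize_With_Sacks_Witnesses} in place of Lemma~\ref{LEM_Diagonalize_With_Witnesses}, and derive the final contradiction purely at the level of conditions --- your write-up in fact usefully spells out the verification (check-name instances subsuming the ground-model argument, and the two-case recursion step) that the paper compresses into ``argue the same way.'' The only imprecision is the claim that only $\aleph_1$-many nice $\SS^{\aleph_0}$-names exist: since $\SS^{\aleph_0}$ is not c.c.c.\ there can be more, and the correct statement (which the paper makes, citing continuous reading of names) is that $\aleph_1$-many names suffice to capture every relevant function in the extension below some condition in the generic filter.
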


	\begin{proof}
		The proof is essentially the same as the proof of Theorem~\ref{THM_Realize_Ideal}.
		The only difference is instead of enumerating $\seq{h_\alpha:B_\alpha\to\omega}{\alpha < \aleph_1}$, where $B_\alpha \notin \I$, we enumerate all pairs $\seq{(p_\alpha, \dot{h}_\alpha)}{\alpha < \aleph_1}$, where $p \in \SS^{\aleph_0}$ and $\dot{h}_\alpha$ is a $\SS^{\aleph_0}$-name such that
		\[
			p_\alpha \forces \dot{h}_\alpha:\dot{B} \to \omega \text{ such that } \dot{B} \notin \langle\I\rangle.
		\]
		We cannot enumerate all such $\SS^{\aleph_0}$-names in order type $\aleph_1$, but using continuous reading of names and {\sf CH} already $\aleph_1$-many nice names suffice, see \cite{FischerSchrittesser_2021} for more details.
		Now, argue the same way as in the proof of Theorem~\ref{THM_Realize_Ideal} using Lemma~\ref{LEM_Diagonalize_With_Sacks_Witnesses} instead to iteratively construct a family $\F$ with $\I = \I_0(\F)$ and $\I_0(\F)$ is indestructible by $\SS^{\aleph_0}$.
		By Corollary~\ref{COR_Indestructibility_Implication} $\I_0(\F)$ is also indestructible by any countably supported product or iteration of Sacks-forcing.
	\end{proof}

	As a corollary we show that in the iterated Sacks-model over a model of {\sf CH} every $\aleph_1$-generated ideal, i.e.\ with a basis of size $\aleph_1$, is realized as the associated ideal of some maximal eventually different family.

	\begin{corollary}\label{COR_Realize_Ideal_Sacks_Model}
		In the iterated Sacks-model for every $\aleph_1$-generated ideal $\I$ there is a maximal eventually different family $\F$ such that $\I = \I_0(\F)$.
	\end{corollary}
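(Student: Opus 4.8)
The plan is to bootstrap from Theorem~\ref{THM_Realize_Ideal_Sacks} by a standard reflection-and-factoring argument. Recall that the iterated Sacks-model is $V[G]$, where $G$ is generic for $\PP_{\omega_2}$, the countable support iteration of Sacks-forcing of length $\omega_2$ over a ground model $V \models {\sf GCH}$. The relevant standing facts are: $\PP_{\omega_2}$ is proper and satisfies the $\aleph_2$-c.c.; for each $\alpha < \omega_2$ the initial segment $\PP_\alpha$ has size at most $\aleph_1$, and hence $V[G_\alpha] \models {\sf CH}$; and $\PP_{\omega_2}$ factors as $\PP_\alpha \forcingconcat \dot{\QQ}$, where $\dot{\QQ}$ is forced to be, in $V[G_\alpha]$, a countable support iteration of Sacks-forcing (of length of order type $\omega_2$).

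First I would reflect the basis of $\I$ to a bounded stage. Fix in $V[G]$ a generating family $\set{B_\xi}{\xi < \omega_1}$ for $\I$, so that $\I = \langle B_\xi : \xi < \omega_1 \rangle \cup \Fin$. Each $B_\xi$ is a real, and by properness together with the $\aleph_2$-c.c.\ every real of $V[G]$ already lies in some $V[G_\beta]$ with $\beta < \omega_2$ (a nice name for a real involves only $\aleph_1$-many countable supports, whose union is bounded below $\omega_2$). Since $\operatorname{cf}(\omega_2) = \omega_2 > \aleph_1$, there is a single $\alpha < \omega_2$ with $\set{B_\xi}{\xi < \omega_1} \subseteq V[G_\alpha]$.

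Next I would apply the already-established theorem inside the intermediate model. Working in $V[G_\alpha]$, which satisfies ${\sf CH}$, let $\I_\alpha := \langle B_\xi : \xi < \omega_1 \rangle^{V[G_\alpha]} \cup \Fin$ be the ideal generated there by the same family. By Theorem~\ref{THM_Realize_Ideal_Sacks} applied to $\I_\alpha$ there is a maximal eventually different family $\F$ with $\I_0(\F) = \I_\alpha$ whose associated ideal is indestructible by any countable support iteration or product of Sacks-forcing. Now pass from $V[G_\alpha]$ to $V[G]$ along the tail $\QQ$ of the factorization, with $V[G] = V[G_\alpha][H]$ for the induced $\QQ$-generic $H$. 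Since $\QQ$ is (in $V[G_\alpha]$) a countable support iteration of Sacks-forcing, the indestructibility clause applies verbatim, so in $V[G]$ we have
$$
\I_0(\F) = \langle \I_0(\F)^{V[G_\alpha]} \rangle = \langle \I_\alpha \rangle^{V[G]},
$$
and moreover $\F$ remains a maximal eventually different family. Finally, because $\I_\alpha$ and $\I$ are generated by the very same family $\set{B_\xi}{\xi < \omega_1}$, the ideal generated by $\I_\alpha$ in $V[G]$ is exactly $\I$; thus $\I_0(\F) = \I$ in the iterated Sacks-model, as desired.

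The main obstacle is concentrated in the first and third steps: I must justify that capturing only the $\aleph_1$-many basis elements suffices, so that the entire basis appears below some $\alpha < \omega_2$ (this is where $\aleph_1$-generatedness and $\operatorname{cf}(\omega_2) > \aleph_1$ are used), and I must verify that the quotient forcing $\QQ$ is genuinely a countable support iteration of Sacks-forcing from the viewpoint of $V[G_\alpha]$, so that the indestructibility of Theorem~\ref{THM_Realize_Ideal_Sacks} literally covers it. Both are standard consequences of the factorization and properness of countable support Sacks iterations; once they are in place, the identification $\langle \I_\alpha \rangle^{V[G]} = \I$ and the preservation of maximality are routine.
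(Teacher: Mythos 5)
Your proposal is correct and takes essentially the same route as the paper's proof: reflect the $\aleph_1$-sized basis into an intermediate model $V[G_\alpha]$ where {\sf CH} holds, apply Theorem~\ref{THM_Realize_Ideal_Sacks} there, and then use that the quotient $\SS_{\omega_2}/\SS_\alpha$ is again a countable support iteration of Sacks-forcing so that the indestructibility clause transfers the identity $\I_0(\F) = \langle \B \rangle = \I$ up to $V[G]$. The only difference is cosmetic: you spell out the reflection step (nice names for reals plus $\cof(\omega_2) > \aleph_1$) that the paper states without proof.
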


	\begin{proof}
		Let $G$ be $\SS_{\omega_2}$-generic, where $\SS_{\omega_2}$ is the countable support iteration of Sacks-forcing of length $\omega_2$ over a model of {\sf CH}.
		Every $\aleph_1$-generated generated ideal $\I$ has a basis $\B$ of size $\aleph_1$ added at an intermediate step $V[G_\alpha]$ with $\alpha < \omega_2$ of the iteration.
		In $V[G_\alpha]$ {\sf CH} still holds, so by the previous theorem there is a maximal eventually different family $\F$ in $V[G_\alpha]$ with $\langle\B\rangle = \I_0(\F)$ and $\I_0(\F)$ is indestructible by any countably supported iteration or product of Sacks-forcing.
		But the quotient forcing $\SS_{\omega_2} / \SS_\alpha$ is isomorphic to $\SS_{\omega_2}$, hence by the indestructibility of $\I_0(\F)$ in $V[G]$ we have that $\I_0(\F) = \langle\langle\B\rangle^{V[G_\alpha]}\rangle = \langle \B \rangle = \I$.
	\end{proof}
	
	\begin{corollary}\label{COR_Very_Non_Van_Douwen}
		In the iterated Sacks-model there is a very non Van Douwen m.e.d.\ family of size $\aleph_1$.
		In particular their existence is consistent with $\c = \aleph_2$.
	\end{corollary}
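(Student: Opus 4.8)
The plan is to realize, already in the ground model, a maximal eventually different family whose associated ideal is a \emph{maximal} ideal of a very special form, and then to transport it through the iteration. Work in the ground model $V \models {\sf CH}$ over which the iterated Sacks model $V[G]$ is built, where $G$ is $\SS_{\omega_2}$-generic. Under {\sf CH} fix a selective (Ramsey) ultrafilter $\U$ on $\omega$; such a $\U$ is in particular a P-point, and its dual $\I := \set{A \subseteq \omega}{\omega \setminus A \in \U}$ is a maximal P-ideal which is proper and contains $\Fin$, hence an ideal in the sense of this paper. Applying Theorem~\ref{THM_Realize_Ideal_Sacks} to $\I$ in $V$ yields a maximal eventually different family $\F$ of size $\aleph_1$ with $\I = \I_0(\F)$ and such that $\I_0(\F)$ is indestructible by any countable support iteration or product of Sacks forcing; in particular it is indestructible by $\SS_{\omega_2}$. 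Since $\I = \I_0(\F)$ is maximal, $\F$ is already very non Van Douwen in $V$.

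Next I would pass to $V[G]$. By the indestructibility clause, $(\I_0(\F))^{V[G]} = \langle \I \rangle$, the ideal generated in $V[G]$ by the ground model ideal $\I$, and this already guarantees that $\F$ remains a maximal eventually different family in $V[G]$. To conclude that $\F$ is very non Van Douwen in $V[G]$ it remains to check that $\langle \I \rangle$ is again \emph{maximal}. Since $\I$ is already closed under subsets and finite unions, $X \in \langle \I \rangle$ iff $X \subseteq A$ for some $A \in \I$, so $\langle \I \rangle$ is maximal iff the filter $\U$ still generates an ultrafilter in $V[G]$, i.e.\ iff $\U$ is preserved by the iteration. This is exactly the point where the special choice of $\I$ enters: it is classical that selective ultrafilters (more generally P-points) are preserved by Sacks forcing and that this preservation is retained along countable support iterations \cite{Shelah_2017}. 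Hence $\U$ still generates an ultrafilter in $V[G]$, so $\langle \I \rangle = (\I_0(\F))^{V[G]}$ is a maximal ideal and $\F$ is very non Van Douwen there.

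Finally, the iteration $\SS_{\omega_2}$ is proper and preserves $\aleph_1$, so $|\F| = \aleph_1$ in $V[G]$, while $\c = \aleph_2$ holds in the iterated Sacks model; this yields a very non Van Douwen m.e.d.\ family of size $\aleph_1$ and witnesses the consistency of their existence with $\c = \aleph_2$. The main obstacle is the second paragraph: Theorem~\ref{THM_Realize_Ideal_Sacks} only delivers $(\I_0(\F))^{V[G]} = \langle \I \rangle$ together with the preservation of the \emph{maximality of $\F$}, but not the maximality of the associated \emph{ideal}; the latter is genuinely a statement about $\U$ surviving the whole iteration as an ultrafilter, and must be supplied by choosing $\I$ to be dual to a Sacks-indestructible (selective) ultrafilter and invoking the corresponding preservation theorem.
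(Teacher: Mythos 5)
Your proposal is correct and follows essentially the same route as the paper: apply Theorem~\ref{THM_Realize_Ideal_Sacks} in the ground model to the dual ideal of a P-point (your selective ultrafilter is a special case), use the Sacks-indestructibility to get $\I_0(\F) = \langle \I \rangle$ in the extension, and invoke the preservation of P-points under countable support iterations of Sacks forcing \cite{Shelah_2017} to conclude that $\langle \I \rangle$ remains maximal. Your explicit separation of the indestructibility clause from the preservation of maximality of the ideal is exactly the (implicit) structure of the paper's argument.
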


	\begin{proof}
		In $V$ by {\sf CH} there is a P-ideal (i.e.\ the dual ideal of a P-point), which are preserved by iterations of Sacks-forcing \cite{Shelah_2017}.
		Thus, by Theorem~\ref{THM_Realize_Ideal_Sacks} there is a maximal eventually different family $\F$ such that in the iterated Sacks-model we have $\I_0(\F) = \langle \I \rangle$, where $\langle\I\rangle$ is a maximal ideal.
		Hence, $\F$ is very non Van Douwen also in the generic extension.		
	\end{proof}
	
	Finally, we note that we may also obtain very non Douwen m.e.d.\ families of size $\aleph_1$ with arbitrarily large continuum in the product Sacks-models.
	The argument is the same as above together with the fact that product Sacks-forcing also preserves P-ideals \cite{Laver1984}.
	
	\bibliographystyle{plain}
	\bibliography{refs}
	
\end{document}